\documentclass[11pt]{amsart}
\usepackage[margin=1in]{geometry}
\usepackage[T1]{fontenc}
\usepackage{lmodern}
\usepackage{amsmath,amssymb,amsthm,mathtools}
\usepackage{xcolor}
\usepackage{enumitem}
\usepackage{microtype}
\usepackage{hyperref}
\hypersetup{
  colorlinks=true,
  linkcolor=blue,
  citecolor=blue,
  urlcolor=blue,
  pdftitle={Vertex volumes, lattice-minima tails, and height zeta functions for PGLd},
  pdfauthor={Soonki Hong and Sanghoon Kwon},
  pdfkeywords={Bruhat-Tits buildings, arithmetic quotients, vertex volumes, lattice minima, height zeta functions}
}

\newcommand{\Fq}{\mathbb F_q}
\newcommand{\OO}{\mathcal O}
\newcommand{\ZZ}{\mathbb Z}
\newcommand{\RR}{\mathbb R}
\newcommand{\CC}{\mathbb C}
\newcommand{\GL}{\operatorname{GL}}
\newcommand{\PGL}{\operatorname{PGL}}
\newcommand{\SL}{\operatorname{SL}}

\newcommand{\diag}{\operatorname{diag}}
\newcommand{\vol}{\operatorname{vol}}
\newcommand{\covol}{\operatorname{covol}}
\newcommand{\supp}{\operatorname{supp}}
\newcommand{\Comp}{\operatorname{Comp}}
\newcommand{\Cuts}{\operatorname{Cuts}}
\newcommand{\Blocks}{\operatorname{Blocks}}

\theoremstyle{plain}
\newtheorem{theorem}{Theorem}[section]
\newtheorem{proposition}[theorem]{Proposition}
\newtheorem{lemma}[theorem]{Lemma}
\newtheorem{corollary}[theorem]{Corollary}

\theoremstyle{definition}

\newtheorem{remark}[theorem]{Remark}
\newtheorem{example}[theorem]{Example}

\usepackage[nameinlink,capitalize]{cleveref}

\numberwithin{equation}{section}
\allowdisplaybreaks

\title[Vertex volumes and height zeta functions for $\PGL_d$]{Vertex volumes, lattice-minima tails, and height zeta functions for the standard arithmetic quotient of $\PGL_d$}
\author{Soonki Hong and Sanghoon Kwon}
\date{July 30, 2026}
\thanks{This work was supported by the Basic Science Research Program through the National Research Foundation of Korea (NRF), funded by the Ministry of Education (grant no. RS-2025-25415913).}

\begin{document}

\begin{abstract}
We study the standard nonuniform arithmetic quotient of the affine
Bruhat--Tits building attached to
\(\PGL_d(\mathbb F_q(\!(t^{-1})\!))\), with Haar measure normalized so
that a maximal compact subgroup has volume one.  We first compute its
vertex volume in closed product form.  The proof is entirely
building-theoretic: vertices are parametrized by a dominant sector,
their stabilizers are counted exactly, and the resulting sum over
block compositions is evaluated by a cut-set recursion.

On the same quotient, we introduce a homothety-invariant normalized
lattice-minima height \(\alpha\).  We determine its exact integrability
threshold, proving that \(\alpha\) belongs to \(L^r\) precisely for
\(0<r<d\), and establish a sharp cusp-tail estimate of order
\(T^{-d}\).  The associated positive-moment height zeta function,
equivalently the Mellin transform of the cusp-height distribution,
converges exactly in the half-plane
\(\operatorname{Re}(s)<d\).  It admits a meromorphic continuation as a
rational function of \(q^{s/d}\) and has a simple pole at \(s=d\), with
an explicit critical coefficient.  We also compute the resulting
rational functions explicitly for \(d=3,4,5\).  Thus the same
dominant-sector coordinates simultaneously control volume, cusp
decay, and the analytic structure of the height zeta function.
\end{abstract}

\subjclass[2020]{Primary 20E42; Secondary 20G25, 11M41, 05A15}
\keywords{Bruhat--Tits buildings, arithmetic quotients, covolumes, lattice-minima functions, cusp tails, Gaussian binomials, local height zeta functions}

\maketitle
\tableofcontents

\section{Introduction}

Let $q$ be a prime power and fix an integer $d\ge2$.  We use the non-archimedean local
field
\(
  F=\Fq(\!(t^{-1})\!),
\)
with valuation ring
\(
  \OO=\Fq[\![t^{-1}]\!],
\)
and absolute value normalized by $|t|=q$.  The main locally compact group, maximal compact
subgroup, and arithmetic subgroup are
\[
  G=\PGL_d(F),\qquad K=\PGL_d(\OO),\qquad \Gamma=\PGL_d(\Fq[t]).
\]
The affine Bruhat--Tits building of $G$ is denoted by $\mathcal B_d$.  Its vertices are
identified with $G/K$.  We normalize Haar measure on $G$ by
\[
  \vol(K)=1.
\]
We refer to Bruhat--Tits~\cite{BruhatTits} and Abramenko--Brown
\cite{AbramenkoBrown} for the structure and standard terminology of affine buildings.
The induced measure on the vertex quotient
\[
  Y=\Gamma\backslash G/K=\Gamma\backslash\mathcal B_d
\]
is denoted by $\nu$.  Thus $\nu(Y)$ is the vertex-volume of the quotient.  We write
$\mu=\nu/\nu(Y)$ for the associated probability measure.  Throughout the paper, the
notations $X\ll Y$ and $Y\gg X$ mean that $X\le CY$ for a constant $C>0$ depending
only on $d$ and $q$.  We write $X\asymp Y$ when both $X\ll Y$ and $Y\ll X$ hold.

The first main result is the following explicit formula.

\begin{theorem}[Vertex volume]\label{thm:volume}
For every $d\ge2$,
\[
  \nu(Y)=\vol(\Gamma\backslash \mathcal B_d)
  =
  \frac{d}{\left(\prod_{m=1}^{d}(q^m-1)\right)
            \left(\prod_{m=2}^{d-1}(q^m-1)\right)}.
\]
The second product in the denominator is interpreted as $1$ when $d=2$.
\end{theorem}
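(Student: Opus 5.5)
We will compute $\nu(Y)=\sum_{y\in Y}1/|\Gamma_y|$. Here $\Gamma_y$ is the stabilizer in $\Gamma$ of a lift of the vertex $y$. This is legitimate because $\vol(K)=1$: each vertex orbit $\Gamma gK$ contributes $\vol(K)/|\Gamma\cap gKg^{-1}|$.

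\textbf{Step 1: parametrize the vertices.} By the function-field reduction theory of Harder and Soulé, $\Gamma\backslash\mathcal B_d$ is the dominant sector. Its vertices are the homothety classes of the lattices $L_\lambda=\bigoplus_i \Fq[\![t^{-1}]\!]\,t^{\lambda_i}e_i$ with $\lambda_1\ge\lambda_2\ge\cdots\ge\lambda_d$, taken modulo adding a common constant. We normalize by $\lambda_d=0$. Each class of $Y$ is hit by exactly one such $\lambda$.

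\textbf{Step 2: count the stabilizers.} The stabilizer of $L_\lambda$ in $\GL_d(\Fq[t])$ is the group of matrices $(a_{ij})$ with $a_{ij}\in\Fq[t]$ and $\deg a_{ij}\le\lambda_i-\lambda_j$. Here entries with $\lambda_i<\lambda_j$ must vanish, so the matrix is block upper triangular. The blocks are given by the composition $d=n_1+\cdots+n_k$ recording where the $\lambda$'s are equal. The order of this group is
\[
\prod_{\ell}|\GL_{n_\ell}(\Fq)|\cdot q^{\sum_{i<j,\ \lambda_i>\lambda_j}(\lambda_i-\lambda_j+1)}.
\]
Passing to $\PGL$, we divide by $|\Fq^\times|=q-1$. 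Scalars act on lattices only by homothety, and every element of $\Gamma$ fixing the class of $L_\lambda$ actually fixes $L_\lambda$, because its determinant is a unit. We must check that no extra elements of $\PGL_d(\Fq[t])$ arise that do not lift to $\GL_d(\Fq[t])$. Since $\Fq[t]$ is a PID with units $\Fq^\times$, the map $\GL_d(\Fq[t])/\Fq^\times\to\PGL_d(\Fq[t])$ may have a cokernel related to $\Fq^\times/(\Fq^\times)^d$. We handle this by a careful index count, and we expect it to produce the factor $d$ in the numerator. Explicitly, $[\PGL_d(\Fq[t]):\text{image}]=|\Fq^\times/(\Fq^\times)^d|$, and stabilizers are modified accordingly. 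We must double-check how this interacts with the normalization; possibly the cleaner route is to work with $\GL$ and note that the $\PGL$ volume equals $\gcd(d,q-1)$ times something. Since the stated formula has a plain $d$, we expect the factor $d$ to come instead from the sum itself, via a telescoping identity, and we will let the computation decide.

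\textbf{Step 3: sum over the sector.} We group the $\lambda$'s by their block composition $(n_1,\dots,n_k)$ and by the gaps $g_\ell=\lambda^{(\ell)}-\lambda^{(\ell+1)}\ge1$ between consecutive block values. The exponent $\sum(\lambda_i-\lambda_j+1)$ is linear in the gaps: the gap $g_\ell$ at cut position $c=n_1+\cdots+n_\ell$ has coefficient $c(d-c)$. The constant part is $\sum_{\ell<m}n_\ell n_m$. Each gap sum is geometric, giving
\[
\sum_{g\ge1}q^{-c(d-c)g}=\frac{1}{q^{c(d-c)}-1}.
\]
Hence
\[
\nu(Y)=\frac{1}{q-1}\sum_{C\subseteq\{1,\dots,d-1\}}\frac{q^{-\sum_{\ell<m}n_\ell n_m}}{\prod_\ell|\GL_{n_\ell}(\Fq)|}\prod_{c\in C}\frac{1}{q^{c(d-c)}-1},
\]
up to the $\PGL$ correction from Step 2. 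Here $C$ is the cut set determining the composition.

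\textbf{Step 4: evaluate the cut-set sum (main obstacle).} We rewrite $|\GL_n(\Fq)|=q^{\binom n2}\prod_{m\le n}(q^m-1)$. Then $\sum_{\ell<m}n_\ell n_m+\sum_\ell\binom{n_\ell}{2}=\binom d2$ is constant, so the summand is a product of factors $1/\prod_{m\le n_\ell}(q^m-1)$ and cut weights. We plan to prove the closed form by induction on $d$ through a recursion that conditions on the last cut $c=\max C$. This expresses $S_d$ through $S_c$, with coefficients $1/\prod_{m\le d-c}(q^m-1)$ and $1/(q^{c(d-c)}-1)$. The target, rewritten, is a Gaussian-binomial-type identity. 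We will first verify $d=2,3$ by hand. For $d=2$ we get
\[
\frac{1}{q-1}\Bigl(\frac{1}{q(q-1)(q^2-1)}+\frac{1}{q(q-1)^2(q-1)}\Bigr),
\]
which should match $2/((q-1)(q^2-1))$. This check will also settle the normalization and Step 2 factor. We then guess the recursion satisfied by the right-hand side, and prove the general identity via a $q$-binomial or partial-fraction argument in the variable $x=q$. Finding the right recursion is where we expect the difficulty.
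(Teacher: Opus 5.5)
\textbf{Gap in Step 4.} Your Steps 1--3 follow the paper's route: dominant sector, block-upper-triangular stabilizers, and one geometric series per cut. Step 4 carries the whole difficulty, and you do not supply it; the plan you sketch cannot work as stated.

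Conditioning on the last cut $c$ does not express $S_d$ through the lower-rank sum $S_c$. The reason is that the cut weight $1/(q^{s(d-s)}-1)$ depends on the ambient $d$. The prefix of a composition of $d$ ending at $c$ carries weights $1/(q^{s(d-s)}-1)$ for $s<c$, not the weights $1/(q^{s(c-s)}-1)$ of the $\PGL_c$ problem. Already for $d=3$, your recursion gives the composition $(1,1,1)$ the cut factor $\frac{1}{(q-1)(q^2-1)}$ instead of $\frac{1}{(q^2-1)^2}$. So no induction on $d$ closes.

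The missing idea is a two-index family with $d$ held fixed. With $Q_n=\prod_{j\le n}(q^j-1)$, let $F_d(n)$ be the sum, over partial paths $0=s_0<\cdots<s_r=n$, of $\prod_{i}Q_{s_i-s_{i-1}}^{-1}\prod_{i\ge1}(q^{s_i(d-s_i)}-1)^{-1}$. Then $F_d(0)=1$, $F_d(n)=\frac{1}{q^{n(d-n)}-1}\sum_{\ell=1}^{n}F_d(n-\ell)Q_\ell^{-1}$, and the full composition sum equals $\sum_{\ell=1}^{d}F_d(d-\ell)Q_\ell^{-1}$.

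One proves $F_d(n)=q^{\binom n2}Q_{d-n-1}/(Q_nQ_{d-1})$ by induction on $n$. The inductive step is the $q$-convolution $\sum_{k=0}^{n}(-1)^kq^{\binom k2}\frac{(a;q)_{n-k}}{(q;q)_k(q;q)_{n-k}}=\frac{(-1)^nq^{\binom n2}a^n}{(q;q)_n}$ at $a=q^{d-n}$. The total then becomes $\frac{1}{Q_{d-1}}\sum_{n=0}^{d-1}\frac{q^{\binom n2}}{Q_n(q^{d-n}-1)}$. Differentiating the same convolution (with $n=d$) in $a$ at $a=1$ evaluates this as $\frac{d\,q^{\binom d2}}{Q_dQ_{d-1}}$. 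That derivative is exactly where the factor $d$ comes from. Until you identify $F_d(n)$ and these two identities, the proposal does not contain the proof.

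\textbf{Two smaller errors in Steps 2--3.} First, there is no $\PGL$ index correction. Since $\operatorname{Pic}(\Fq[t])=0$, every element of $\PGL_d(\Fq[t])$ lifts to $\GL_d(\Fq[t])$, i.e.\ $\PGL_d(\Fq[t])=\GL_d(\Fq[t])/\Fq^\times$. The quotient $\Fq^\times/(\Fq^\times)^d$ you mention measures the index of the image of $\SL_d(\Fq[t])$, which plays no role here.

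Second, $|\Gamma_m|$ is the $\GL$-stabilizer order divided by $q-1$, so each term $1/|\Gamma_m|$ carries a factor $q-1$, not $1/(q-1)$. Your displayed $d=2$ expression actually equals $2/((q-1)^3(q^2-1))$, which is off by $(q-1)^2$. With prefactor $q-1$ it gives exactly $2/((q-1)(q^2-1))$.
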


The next result concerns a normalized lattice-minima function.  Let
\[
  \Lambda_0=\Fq[t]^d\subset F^d
\]
be the standard $\Fq[t]$-lattice.  We use the max norm on $F^d$ induced by $|t|=q$, and
on $\bigwedge^jF^d$ we use the associated sup norm in the standard exterior basis.  If
$\Lambda$ is a rank-$d$ $\Fq[t]$-lattice in $F^d$, let $\covol(\Lambda)$ be normalized by
$\covol(\Lambda_0)=1$ and by
\[
  \covol(g\Lambda_0)=|\det g|
\]
for $g\in\GL_d(F)$.  This convention is used only to make the following function invariant
under homotheties.  For the homothety class $[\Lambda]$ define
\begin{equation}\label{eq:alpha-def-intro}
  \alpha([\Lambda])
  =\max_{1\le j\le d}
    \left\{
    \covol(\Lambda)^{j/d}
    \|v_1\wedge\cdots\wedge v_j\|^{-1}:
    v_1,\ldots,v_j\in\Lambda \text{ are $F$-linearly independent}
    \right\}.
\end{equation}
For unimodular lattices, i.e. $\covol(\Lambda)=1$, this is the usual lattice-minima
function.  We define the function on
\[
  Y=\Gamma\backslash G/K
\]
through the inversion map
\[
  \Gamma gK\longmapsto Kg^{-1}\Gamma.
\]
Namely, $\alpha(\Gamma gK)$ is the usual $K$-invariant height of the homothety class
$[g^{-1}\Lambda_0]$.  Since $K$ acts by isometries and $\Gamma$ preserves
$\Lambda_0$, this is well-defined.

\begin{theorem}[Critical integrability exponent]\label{thm:integrability}
For every real $r>0$,
\[
  \int_Y \alpha(x)^r\,d\nu(x)<\infty
  \quad\Longleftrightarrow\quad
  0<r<d.
\]
The same criterion holds with the probability measure $\mu$ in place of $\nu$.
\end{theorem}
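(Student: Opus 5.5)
We work in the dominant-sector coordinates used for \cref{thm:volume}. By reduction theory for $\Gamma=\PGL_d(\Fq[t])$, every vertex of $Y$ has a unique representative $x_\lambda=[\diag(t^{\lambda_1},\dots,t^{\lambda_d})]$ with $\lambda_1\ge\cdots\ge\lambda_d$ modulo the diagonal shift, i.e. $\lambda$ lies in the dominant sector $P^+$. The proof of \cref{thm:volume} gives the weights
\[
\nu(\{x_\lambda\})=|\mathrm{Stab}_\Gamma(x_\lambda)|^{-1}\asymp q^{-2\rho(\lambda)},\qquad 2\rho(\lambda)=\sum_{i<j}(\lambda_i-\lambda_j),
\]
with implied constants depending only on $d$ and $q$. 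These constants come from finitely many block types, one for each composition recording which consecutive gaps vanish. So $\int_Y\alpha^r\,d\nu\asymp\sum_{\lambda\in P^+}\alpha(x_\lambda)^r q^{-2\rho(\lambda)}$. The statement for $\mu$ follows at once, since $\mu$ is $\nu$ divided by the finite constant $\nu(Y)$.

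Next we compute $\alpha(x_\lambda)$. Under the inversion map the relevant lattice is $g^{-1}\Lambda_0$ with $g^{-1}=\diag(t^{-\lambda_i})$, which has the orthogonal basis $t^{-\lambda_i}e_i$. For a diagonal lattice with orthogonal basis in the max norm, the wedge $v_1\wedge\dots\wedge v_j$ of minimal norm over primitive $j$-subspaces comes from the $j$ shortest basis vectors. Shortest means the largest $\lambda_i$, so the minimal norm is $q^{-(\lambda_1+\dots+\lambda_j)}$. Since $\covol=q^{-|\lambda|}$ with $|\lambda|=\sum\lambda_i$, we get
\[
\log_q\alpha(x_\lambda)=\max_{1\le j\le d}\Big(\lambda_1+\cdots+\lambda_j-\tfrac{j}{d}|\lambda|\Big)=:h(\lambda).
\]
Two points need justification. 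The first is that diagonal lattices realize the minimum over all independent families; this follows from Hadamard-type ultrametric orthogonality, using that $\{t^{-\lambda_i}e_i\}$ is an orthogonal basis. The second is that $h(\lambda)\ge0$ and $h$ is piecewise linear on $P^+$ and comparable to $\lambda_1-\lambda_d$.

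Now write $\lambda$ through its gaps $n_k=\lambda_k-\lambda_{k+1}\ge0$ for $k=1,\dots,d-1$. Then
\[
2\rho=\sum_k k(d-k)n_k,\qquad h_j=\lambda_1+\cdots+\lambda_j-\tfrac jd|\lambda|=\sum_k c_{jk}n_k,
\]
where $c_{jk}=\frac{k(d-j)}{d}$ for $k\le j$ and $c_{jk}=\frac{j(d-k)}{d}$ for $k\ge j$. In particular $c_{jj}=j(d-j)/d$. We bound $\alpha^r\le\sum_j q^{rh_j}$ in one direction and $\alpha^r\ge q^{rh_j}$ for each $j$ in the other. The sum then factors into geometric series:
\[
\sum_{n\in\ZZ_{\ge0}^{d-1}} q^{\sum_k (r c_{jk}-k(d-k))n_k}<\infty \iff r c_{jk}<k(d-k)\ \text{for all }k.
\]
The ratio $k(d-k)/c_{jk}$ equals $d(d-k)/(d-j)\ge d$ when $k\le j$, and $dk/j\ge d$ when $k\ge j$, with equality exactly at $k=j$. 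Hence every series with index $j$ converges iff $r<d$, and the critical direction is the pure gap $n_j$, along which $\alpha^r q^{-2\rho}=q^{(r-d)c_{jj}n_j}$. Taking $j=1$ (or any $j$) and $\lambda=(n,0,\dots,0)$ gives divergence for $r\ge d$. Convergence for $r<d$ follows from the upper bound.

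The main obstacle is the exact shape of the stabilizer weights. We need the two-sided bound $\nu(\{x_\lambda\})\asymp q^{-2\rho(\lambda)}$ uniformly, including on walls of the sector where some gaps vanish and the Levi part contributes a bounded factor such as a $|\GL_m(\Fq)|$ term. I expect this to come directly from the stabilizer count in the proof of \cref{thm:volume}. There $\mathrm{Stab}_\Gamma(x_\lambda)$ is a parabolic subgroup over $\Fq$ extended by the unipotent polynomial entries of degree at most $\lambda_i-\lambda_j$, which has order $q^{\sum_{i<j}(\lambda_i-\lambda_j+1)}$ up to Levi factors. If that count has extra $+1$ offsets, they only change constants, so the threshold $d$ is unaffected.
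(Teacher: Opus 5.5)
Your proof is correct and follows the paper's argument: the same sector sum $\sum_\delta C(\supp\delta)q^{rH(\delta)-\Phi(\delta)}$ with $\Phi=2\rho$, the same diagonal formula for $\alpha$, and the same rank-one ray $\delta=Ne_j$ for divergence. Lemma~\ref{lem:stabilizer} gives the weight exactly as $C(\supp\delta)q^{-\Phi(\delta)}$ with finitely many constants, so the stabilizer question you flag as your ``main obstacle'' is settled. The only difference is the convergence step: you bound $\max_j$ by $\sum_j$ and use the coefficientwise inequality $d\,c_{jk}\le k(d-k)$, which is the content of Lemma~\ref{lem:transverse-gap} and is what the paper uses for the tail bound. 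The paper's integrability proof instead uses the global modular inequality $\Phi\ge dH$ of Lemma~\ref{lem:Phi-dH}, proved by concavity; either route works.
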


The proof of both theorems uses the same exponent coordinates.  For the volume calculation
we use the $\PGL_d$ homothety normalization
\[
  m=(m_1,\ldots,m_d),\qquad m_1\ge\cdots\ge m_d=0.
\]
For the height calculation we pass from $m$ to the trace-zero vector
\[
  n_i=m_i-\frac1d\sum_{\ell=1}^d m_\ell.
\]
Keeping these two normalizations separate is essential: the first gives actual double-coset
representatives in $\PGL_d$, while the second gives the determinant-normalized exponents
that govern the homothety-invariant height.

We next attach a zeta function to the cusp height.  Using the
positive-moment convention, define
\begin{equation}\label{eq:zeta-s}
  Z_\alpha(s)=\int_Y \alpha(x)^s\,d\nu(x),
  \qquad s\in\CC.
\end{equation}
Equivalently, \(Z_\alpha\) is the Mellin transform of the
cusp-height distribution determined by \(\alpha\) and \(\nu\). See Section~\ref{sec:hzf}.

\begin{theorem}[Height zeta function]\label{thm:height-zeta-main}
The height zeta function \(Z_\alpha(s)\) converges absolutely if and only if
\[
  \operatorname{Re}(s)<d.
\]
It extends meromorphically to all \(s\in\CC\) as a rational function of
\[
  u=q^{s/d}.
\]
At \(s=d\), it has a simple pole, and
\[
  Z_\alpha(s)=\frac{\kappa_{d,q}}{d-s}+O(1)
  \qquad (s\to d^-),
\]
where \(\kappa_{d,q}>0\) is given explicitly by
Corollary~\ref{cor:critical-pole-coefficient}.
\end{theorem}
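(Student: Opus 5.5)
The proof reduces $Z_\alpha(s)$ to an explicit lattice-point sum over the dominant sector, using the same coordinates as in the proofs of \cref{thm:volume} and \cref{thm:integrability}. Every vertex of $Y$ has a unique representative $\Gamma\,\diag(t^{m_1},\ldots,t^{m_d})K$ with $m_1\ge\cdots\ge m_d=0$. Its $\nu$-mass is $1/|\Gamma_m|$, where $\Gamma_m$ is the stabilizer of that vertex in $\Gamma$; these stabilizers were counted exactly in the volume computation. The count depends only on the block composition of $m$, i.e.\ which consecutive gaps $m_i-m_{i+1}$ vanish, and on the linear form $\sum_{i<j}(m_i-m_j)$. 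Concretely,
\[
  \nu(\{x_m\}) = c_{\beta}\, q^{-\sum_{i<j}(m_i-m_j)},
\]
where $\beta$ is the block composition and $c_\beta$ is a product of Gaussian-type factors $\prod|\GL_{b}(\Fq)|^{-1}$ over the blocks, corrected by the center.

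Next I evaluate $\alpha$ at these vertices. By Minkowski-type reduction over $\Fq[t]$, the standard basis is reduced for $\diag(t^{-m_i})\Lambda_0$. Hence the maximum in \eqref{eq:alpha-def-intro} is attained on coordinate wedges of the $j$ smallest vectors, and I obtain
\[
  \alpha(x_m)=q^{\max_j (n_1+\cdots+n_j)}=q^{n_1+\cdots+n_{d-1}}\text{-type expression},
\]
a piecewise-linear function of the gaps $a_i=m_i-m_{i+1}\ge0$. With trace-zero $n$ the partial sums are all nonnegative and increasing up to the natural maximal one. The exact form was already needed for \cref{thm:integrability}, where it was shown to be $q^{L(a)/d}$ with $L(a)=\max_j\bigl(d(n_1+\cdots+n_j)\bigr)$, an integral combination of the $a_i$. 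Therefore
\[
  Z_\alpha(s)=\sum_{\beta}c_\beta\sum_{a\in \ZZ_{>0}^{\beta}} u^{L(a)}\,q^{-\sum_{i<j}(m_i-m_j)},\qquad u=q^{s/d}.
\]

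The proof then splits into two steps.

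\emph{Rationality.} I subdivide the positive orthant of gaps into finitely many polyhedral cones on which $L$ is linear, namely the chambers where a given index $j$ realizes the maximum. On each cone the summand is a monomial $\prod_i (u^{\ell_i}q^{-w_i})^{a_i}$, where $w_i=i(d-i)$ is the weight of $a_i$ in $\sum_{i<j}(m_i-m_j)$. Summing a monomial over a rational cone gives a rational function in $u$, via the usual generating-function argument with a unimodular or simplicial decomposition. This proves meromorphic continuation as a rational function of $u$. Absolute convergence holds exactly when every ray generator $\rho$ of every cone satisfies $\operatorname{Re}(s)\,\ell(\rho)/d<w(\rho)$. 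I expect the binding rays to be the extremal gap directions $a_1$ and $a_{d-1}$, where $\ell=d-1$ and $w=d-1$, together with the interior rays. Checking that $\min_\rho d\,w(\rho)/\ell(\rho)=d$ is exactly the content of \cref{thm:integrability}, so the half-plane $\operatorname{Re}(s)<d$ follows.

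\emph{The pole at $s=d$.} The pole comes from the factors $(1-u^{\ell}q^{-w})^{-1}$ with $d\,w/\ell=d$ that become singular at $u=q$. I expect them to be precisely the rays $e_1$ and $e_{d-1}$. The main obstacle is showing that no two such critical rays span a common cone, since that would produce a double pole. One must also show that the residues from the two ends add rather than cancel. I would do this by direct inspection of the chamber where the maximum index is $j=1$ versus $j=d-1$: these critical rays lie in different chambers, so each cone contains at most one critical ray. Expanding $(1-u^{d-1}q^{-(d-1)})^{-1}$ near $u=q$ gives $\frac{1}{d-1}\cdot\frac{d}{d-s}\cdot\frac{1}{\log q}$-type behavior. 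Collecting the remaining (convergent) sums over the complementary faces then gives the positive coefficient $\kappa_{d,q}$ of \cref{cor:critical-pole-coefficient}. Positivity follows because all contributing terms are positive sums of measures.
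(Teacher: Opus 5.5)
Your overall plan matches the paper's: a dominant-sector sum with weights $C(\supp\delta)q^{-\Phi(\delta)}$, chambers defined by the maximizing partial sum, rational cone generating functions, and an analysis of each chamber at $u=q$. The pole step, however, contains a genuine error: the critical rays are not just $e_1$ and $e_{d-1}$.

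By Lemma~\ref{lem:Phi-dH}, equality $\Phi=dH$ holds on \emph{every} rank-one ray. For each $1\le j\le d-1$ we have $b_{\ell j}<b_{jj}=j(d-j)$ for $\ell\ne j$. Hence
\[
dH(Ne_j)=dS_j(Ne_j)=j(d-j)N=\Phi(Ne_j),
\]
so in your notation $\ell(e_j)=w(e_j)=j(d-j)$. Each chamber $j$ therefore carries its own singular factor $\bigl(1-(u/q)^{j(d-j)}\bigr)^{-1}$, whose singular part at $s=d$ is $\frac{d}{j(d-j)\log q}\cdot\frac{1}{d-s}$.

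For $d\le 3$ your list happens to be complete. For $d\ge4$, restricting to $j\in\{1,d-1\}$ drops the positive contributions of $2\le j\le d-2$; for example, when $d=4$ you lose the $j=2$ term with weight $1/4$. The coefficient you would obtain is then strictly smaller than the $\kappa_{d,q}$ of Corollary~\ref{cor:critical-pole-coefficient}. Your convergence step gestures at this with ``together with the interior rays,'' but the pole step then discards those rays. The ``at most one critical ray per cone'' check must also be made for all $d-1$ rays, not two.

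A smaller slip: $q^{n_1+\cdots+n_{d-1}}=q^{S_{d-1}}$ is not $\alpha$ in general. Only $q^{\max_j S_j}$ is, which is the form you actually use afterwards.

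The repair is straightforward but must cover all $j$.
\begin{itemize}
\item Since $S_\ell(e_j)<S_j(e_j)$ for $\ell\ne j$, the ray $e_j$ lies only in chamber $j$. Every other ray generator $\rho$ of that chamber satisfies $\Phi(\rho)>dH(\rho)$, so its denominator factor does not vanish at $u=q$. This gives simplicity of the pole.
\item For the explicit coefficient, the paper writes $\delta=Ne_j+z$ with $z$ transverse. Lemma~\ref{lem:transverse-gap} gives $\Phi-dS_j=R_j(z)$, independent of $N$.
\item The index $j$ is the selected maximizer once $N\ge N_0(z)$, with $N_0(z)$ growing linearly in $z$. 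Because $\sum_z N_0(z)q^{-R_j(z)}<\infty$, the discarded initial segments contribute only $O(1)$.
\end{itemize}
Your phrase ``collecting the remaining sums over the complementary faces'' should be replaced by this argument, or by an equivalent cone-by-cone residue computation over all $j$. Only then do you arrive at
\[
\kappa_{d,q}=\frac{d}{\log q}\sum_{j=1}^{d-1}\frac{1}{j(d-j)}\sum_z C\bigl(\{j\}\cup\supp z\bigr)q^{-R_j(z)}.
\]
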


\subsection*{Context and novelty}

The covolume formula in Theorem~\ref{thm:volume} is compatible with the general theory of
arithmetic quotients. Prasad's volume formula treats absolutely quasi-simple simply
connected groups over global fields; related semisimple quotients, such as $\PGL_d$, are
obtained after the usual central-isogeny bookkeeping and after the global and local Haar
normalizations have been specified~\cite{Prasad}.  Harder's Gauss--Bonnet formula gives
another conceptual route through the Euler--Poincar\'e measure~\cite{Harder}.  We do not
claim a new general covolume formula.

Minimum-covolume problems over non-Archimedean local fields were studied in rank one by
Lubotzky~\cite{LubotzkyMinimal} and, for Chevalley groups in positive characteristic, by
Salehi Golsefidy~\cite{SalehiGolsefidy,SalehiGolsefidyNonuniform}.  Those works vary the
lattice and seek global minimizers, whereas Theorem~\ref{thm:volume} fixes the standard
arithmetic lattice and computes its \(K\)-vertex volume in a specified local Haar
normalization.

Dominant-sector coordinates and low-dimensional stabilizer weights for the same
standard quotient appeared in our earlier analyses of the cases \(d=3\) and \(d=4\)
\cite{HongKwonPGL3,HongKwonPGL4}.  More generally, Sela--Schaps--Vishne determined
fundamental domains for congruence subgroups of the standard non-uniform lattice and
expressed finite covolume through stabilizer weights \cite{SelaSchapsVishne}.  Thus the
sector and stabilizer sum themselves are not asserted to be new here.  On the volume side,
our new contribution is the closed evaluation, in the normalization \(\vol(K)=1\), of the
resulting all-dimensional composition sum by a canonical cut-set recursion.  The same
simple-root coordinates then yield the sharp \(L^r\) threshold, the two-sided \(T^{-d}\)
tail, and the rational height zeta function.

\begin{enumerate}[label=(\roman*)]
\item We compute the $K$-vertex volume of
$\PGL_d(\Fq[t])\backslash\PGL_d(\Fq(\!(t^{-1})\!))/\PGL_d(\Fq[\![t^{-1}]\!])$ directly in the
normalization $\vol(K)=1$, without passing through Tamagawa or Euler--Poincar\'e
normalizations.
\item After the standard sector decomposition is fixed, the remaining volume
computation is an explicit building-side summation: exact stabilizer counts, a fixed-block
geometric summation, and a canonical cut-set recursion on the path \(0\to d\).
\item The same simple-root difference coordinates simultaneously control the normalized
lattice-minima function.  They give not only the sharp criterion
$\alpha\in L^r\Longleftrightarrow r<d$, but also a sharp $T^{-d}$ cusp-tail estimate, an
explicit critical pole coefficient, and rational height zeta functions.
\item In the cases \(d=3,4,5\), the rational-cone formula gives explicit closed
formulas for \(Z_{\alpha,3}\), \(Z_{\alpha,4}\), and \(Z_{\alpha,5}\).  These provide test
cases for a more systematic theory of height zeta functions on affine-building quotients.
\end{enumerate}

When \(d=2\), the building is a tree and the stabilizer-inverse volume sum is the familiar
graph-of-groups covolume formula; see Serre~\cite{Serre} and Bass--Lubotzky
\cite{BassLubotzky}.  Lubotzky's structure theory for rank-one lattices over local fields
provides further context for non-uniform cusps in positive characteristic
\cite{LubotzkyRankOne}.  The present argument may be viewed as a higher-rank,
normalization-sensitive analogue for this particular arithmetic quotient.

Arithmetic quotients of Bruhat--Tits buildings for projective general linear groups in
positive characteristic have also been studied from automorphic and modular-symbol
viewpoints~\cite{KondoYasuda}.  Those works are complementary to the present calculation: they explain the
representation-theoretic and homological geometry of the quotient, while our focus is the
explicit vertex-volume/height summation in the standard sector.

For the broader function-field reduction-theoretic and automorphic background, see
Harder~\cite{HarderReduction,HarderChevalley}; for the geometric and homological use of
buildings in the function-field arithmetic setting, see also Borel--Serre and
Stuhler~\cite{BorelSerre,Stuhler}.

The integrability theorem is likewise consistent with the general homogeneous-dynamics
picture.  Logarithm laws and cusp excursions on spaces of lattices are classical in the
real case \cite{KleinbockMargulis}; the treatment of unbounded test functions in
\cite{KleinbockShiWeiss} provides a complementary integrability perspective.  Ultrametric
logarithm laws and related local-field equidistribution results are given in
\cite{AthreyaGhoshPrasadI,AthreyaGhoshPrasadII,KwonLim}.  The point here is that the
standard quotient for $\PGL_d(\Fq(\!(t^{-1})\!))$ admits an exact
sector-coordinate summation rather than only a qualitative non-divergence estimate.

Quantitative non-divergence on products of real and non-Archimedean homogeneous spaces is
developed systematically by Kleinbock--Tomanov~\cite{KleinbockTomanov}.  Our theorem is
more specialized, but in the present discrete sector it identifies the exact critical
exponent and the precise power of the cusp tail.

\section{Vertex volume and the cut-set summation}

We first describe the dominant sector and its stabilizer weights, and then evaluate
the resulting composition sum by the cut-set recursion.  Together, these steps prove
Theorem~\ref{thm:volume}.

\subsection{The standard sector}

For an integer vector $m=(m_1,\ldots,m_d)\in\ZZ^d$, define
\[
  x_m=\diag(t^{m_1},\ldots,t^{m_d})\in\PGL_d(F).
\]
The following standard reduction fixes the sector used throughout the paper.
It is the explicit \(\PGL_d\) instance of the reduction-theoretic sector appearing in
Harder's function-field reduction theory~\cite{HarderReduction}; the proof below uses the
equivalent splitting description of vector bundles on \(\mathbb P^1\).

\begin{lemma}[Dominant sector]\label{lem:dominant-sector}
The map
\begin{equation}\label{eq:dominant-m}
  m=(m_1,\ldots,m_d),\qquad m_1\ge\cdots\ge m_d=0,
  \qquad\longmapsto\qquad \Gamma x_mK
\end{equation}
is a bijection from the displayed set of integer vectors onto the double quotient
$\Gamma\backslash G/K$.
\end{lemma}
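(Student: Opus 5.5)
We prove \cref{lem:dominant-sector} by passing from the double quotient $\Gamma\backslash G/K$ to isomorphism classes of vector bundles on $\mathbb P^1_{\Fq}$ modulo twisting, and then applying Grothendieck's splitting theorem.

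\emph{Step 1: bundles.} First lift to $\GL_d$. A coset $gK$ with $g\in\GL_d(F)$ determines the $\OO$-lattice $g\OO^d\subset F^d$. Gluing the trivial bundle $\Fq[t]^d$ on $\mathbb A^1=\operatorname{Spec}\Fq[t]$ with the lattice $g\OO^d$ at the point $\infty$ (local ring $\OO$, completion field $F$) gives a rank-$d$ vector bundle $\mathcal E_g$ on $\mathbb P^1$. Every rank-$d$ bundle on $\mathbb P^1$ arises this way, because every bundle on $\mathbb A^1$ is trivial ($\Fq[t]$ is a PID) and so is every bundle over $\operatorname{Spec}\OO$. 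Moreover $\mathcal E_g\cong\mathcal E_{g'}$ exactly when $g'\in\GL_d(\Fq[t])\,g\,\GL_d(\OO)$. The reason is that an isomorphism restricts to an automorphism of the trivial bundle over $\mathbb A^1$, which is an element of $\GL_d(\Fq[t])$, and to an isomorphism of the local lattices, which is an element of $\GL_d(\OO)$; these two must be compatible over $F$. This is the Weil uniformization
\[
\GL_d(\Fq[t])\backslash\GL_d(F)/\GL_d(\OO)\;\cong\;\{\text{rank-$d$ bundles on }\mathbb P^1\}/\cong .
\]

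\emph{Step 2: splitting.} By Grothendieck's theorem every such bundle is $\bigoplus_i\mathcal O(a_i)$ with the multiset $\{a_i\}$ uniquely determined. The bundle attached to $x_m=\diag(t^{m_1},\dots,t^{m_d})$ is $\bigoplus_i\mathcal O(\pm m_i)$; the sign depends only on the gluing convention and is harmless, since we may reorder. Hence the diagonal matrices $x_m$ with $m_1\ge\cdots\ge m_d$ and $m\in\ZZ^d$ form a complete set of representatives for the $\GL_d$ double quotient, with no repetitions. Permutation matrices lie in $\GL_d(\Fq[t])$, so we may indeed sort the entries.

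\emph{Step 3: passing to $\PGL_d$.} The center $F^\times$ acts on $\GL_d(F)$. We have $F^\times=t^{\ZZ}\cdot\OO^\times$, and $\OO^\times$ is absorbed into $\GL_d(\OO)$, so scalar multiplication reduces to multiplying by $t^{c}$. This shifts $m\mapsto m+c(1,\dots,1)$, which corresponds to tensoring the bundle with a line bundle $\mathcal O(\pm c)$. We must also check that the maps $\GL_d(\Fq[t])\to\Gamma$ and $\GL_d(\OO)\to K$ are surjective, so that no extra identifications arise. For $\Gamma$, note that $\PGL_d(\Fq[t])$ is by definition $\GL_d(\Fq[t])/\Fq^\times$. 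We also need that an element of $\PGL_d(F)$ which is integral on both sides is what it should be; here $H^1$ of $\mathbb G_m$ over $\Fq[t]$ and over $\OO$ vanishes (both are PIDs), so $\PGL_d(R)=\GL_d(R)/R^\times$ for $R=\Fq[t],\OO$. Therefore
\[
\Gamma\backslash G/K\;=\;\bigl(\GL_d(\Fq[t])\backslash\GL_d(F)/\GL_d(\OO)\bigr)\big/\,t^{\ZZ},
\]
with $t^{\ZZ}$ acting by shifting $m$. Each shift orbit of sorted vectors contains exactly one vector with $m_d=0$, which gives the claimed bijection.

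The main obstacle is the care needed in Step 1: verifying the gluing dictionary and its direction convention, and checking that $\GL_d(\Fq[t])$-equivalence matches bundle isomorphism exactly. If one wants to avoid bundles, the alternative is an elementary reduction. For existence, use a reduced-basis argument for $\Fq[t]$-lattices with respect to the norm at $\infty$: repeatedly choose successive minima vectors, which is possible because the norm is ultrametric and the values are discrete. For uniqueness, show that the successive minima $q^{m_i}$ of the lattice $x_m^{-1}\Lambda_0$ are invariants of the double coset, and that they recover $m$ up to the overall shift.
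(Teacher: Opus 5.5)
Your proposal is correct and follows essentially the same route as the paper: you identify the $\GL_d$ double quotient with rank-$d$ bundles on $\mathbb P^1$ by gluing, apply the Birkhoff--Grothendieck splitting theorem to get unique sorted diagonal representatives, and then pass to $\PGL_d$ by quotienting by the scalars $t^cI_d$ to normalize $m_d=0$. You also check that $\GL_d(R)\to\PGL_d(R)$ is surjective for $R=\Fq[t],\OO$ (via $\operatorname{Pic}R=0$), which rules out any further identifications when passing to $\PGL_d$; the paper leaves this point implicit.
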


\begin{proof}
We first work with $\GL_d$ and then pass to $\PGL_d$.  Let $A=\Fq[t]$ and
$\pi=t^{-1}$, so that $F=\Fq(\!(\pi)\!)$ and $\OO=\Fq[\![\pi]\!]$.  A matrix
$g\in\GL_d(F)$ defines a vector bundle $\mathcal E_g$ on $\mathbb P^1_{\Fq}$ by gluing the
trivial bundle on $\operatorname{Spec}A$ to the trivial bundle on the formal disc at
$\infty$ through the transition function $g$ on the punctured disc. This formal
gluing is the vector-bundle case of the Beauville--Laszlo descent lemma
\cite{BeauvilleLaszlo}. Multiplication of $g$
on the left by $\GL_d(A)$ and on the right by $\GL_d(\OO)$ only changes the two
trivializations, so the double coset records the isomorphism class of this vector bundle
with its two local trivializations forgotten.

By the Birkhoff--Grothendieck theorem over $\mathbb P^1$~\cite{Grothendieck}
(in its algebraic form over an arbitrary field; see Hazewinkel-Martin \cite{HazewinkelMartin}), there are uniquely determined
integers $m_1\ge\cdots\ge m_d$ such that
\[
  \mathcal E_g\simeq \mathcal O(m_1)\oplus\cdots\oplus\mathcal O(m_d).
\]
Equivalently, after changing the two trivializations, $g$ is represented by
$\diag(t^{m_1},\ldots,t^{m_d})$.  This gives representatives for the corresponding
$\GL_d(A)\backslash\GL_d(F)/\GL_d(\OO)$ double quotient, unique up to the displayed order.
Passing to $\PGL_d$ identifies tuples differing by a common additive constant, because
multiplication by the scalar matrix $t^c I_d$ adds $c$ to all exponents.  We therefore choose
the unique representative of this scalar class with $m_d=0$.  This proves both existence and
uniqueness in $\Gamma\backslash G/K$.
\end{proof}

The vector in Lemma~\ref{lem:dominant-sector} is called the dominant representative of the
double coset.  The stabilizer of the corresponding vertex is
\begin{equation}\label{eq:Gamma-m}
  \Gamma_m=\Gamma\cap x_mKx_m^{-1}.
\end{equation}

\begin{lemma}[Stabilizer-inverse volume formula]\label{lem:orbit-stabilizer-volume}
With the Haar normalization $\vol(K)=1$,
\begin{equation}\label{eq:volume-stabilizer-sum}
  \nu(Y)=\sum_m\frac1{|\Gamma_m|},
\end{equation}
where $m$ runs over all vectors satisfying $m_1\ge\cdots\ge m_d=0$.
\end{lemma}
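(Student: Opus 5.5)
The plan is to unfold the measure $\nu$ orbit by orbit. By the construction of $\nu$ as the measure induced on $\Gamma\backslash G/K$ from Haar measure with $\vol(K)=1$, the $\nu$-mass of a point $\Gamma gK$ is computed as follows. Identify $Y$ with $\Gamma\backslash(G/K)$, and give $G/K$ the counting measure; this is the pushforward of Haar measure normalized by $\vol(K)=1$, since each coset $gK$ has volume one. The induced quotient measure assigns to the orbit $\Gamma\cdot gK$ the weight $1/|\mathrm{Stab}_\Gamma(gK)|$. Concretely, one takes a fundamental domain $\mathcal F\subset G$ for the left action of $\Gamma$, up to the finite-stabilizer correction, and writes
\[
\int_{\Gamma\backslash G}\mathbf 1\,dg=\sum_{\Gamma gK\in Y}\frac{\vol(gK)}{|\Gamma\cap gKg^{-1}|}.
\]
This is the standard unfolding identity $\int_G f=\int_{\Gamma\backslash G}\sum_{\gamma\in\Gamma}f(\gamma g)$, applied to $f=\mathbf 1_{gK}$. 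The orbit $\Gamma gK\subset G$ is the disjoint union of the cosets $\gamma gK$ indexed by $\Gamma/(\Gamma\cap gKg^{-1})$, and $\sum_\gamma \mathbf 1_{gK}(\gamma h)=|\Gamma\cap gKg^{-1}|$ for $h\in\Gamma gK$. Hence $\int_{\Gamma\backslash G}\mathbf 1_{\Gamma gK}=1/|\Gamma\cap gKg^{-1}|$.

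Next, the stabilizers must be finite, so that the weights make sense. Since $\Gamma$ is discrete in $G$ and $x_mKx_m^{-1}$ is compact, the intersection $\Gamma_m$ is finite. Concretely, $\Gamma_m$ consists of classes of matrices in $\GL_d(\Fq[t])$ whose $(i,j)$ entry has degree at most $m_i-m_j$ (zero if $m_i<m_j$), modulo scalars. The key point here is that the center is handled correctly. In $\PGL_d$ the lattice is $\Gamma=\PGL_d(\Fq[t])$, and one works directly in $\PGL_d(F)$ with $K=\PGL_d(\OO)$, so the identity $\mathrm{Stab}_\Gamma(x_mK)=\Gamma\cap x_mKx_m^{-1}$ is an identity in $\PGL_d$ and no extra central factor appears.

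Finally, summing over $Y$ and invoking Lemma~\ref{lem:dominant-sector} gives the result. The double cosets are exactly the $\Gamma x_mK$ with $m_1\ge\cdots\ge m_d=0$, each counted once, which yields \eqref{eq:volume-stabilizer-sum}; countable additivity justifies the sum, with value possibly $+\infty$ a priori. The main obstacle is to justify carefully the measure-theoretic unfolding on $\Gamma\backslash G$ when $\Gamma$ may have torsion, that is, when it does not act freely. I would handle this by working with the $\Gamma$-invariant function $\sum_{\gamma}\mathbf 1_{gK}(\gamma h)$ on $G$ rather than with a strict fundamental domain. I would also check that the normalization of $\nu$ as the pushforward of the quotient Haar measure on $\Gamma\backslash G$ (counting measure on $\Gamma$) matches the convention of the introduction.
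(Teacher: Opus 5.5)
Your proposal is correct and follows essentially the same route as the paper. The paper computes the mass of the image of $x_mK$ in $\Gamma\backslash G$ by observing that $k\mapsto\Gamma x_mk$ has as fibers the cosets of the finite group $x_m^{-1}\Gamma x_m\cap K$, of order $|\Gamma_m|$; this is exactly the content of your unfolding identity $\sum_{\gamma\in\Gamma}\mathbf 1_{x_mK}(\gamma h)=|\Gamma_m|\,\mathbf 1_{\Gamma x_mK}(h)$, and both arguments then sum over the dominant representatives of Lemma~\ref{lem:dominant-sector} (your degree-bound description of $\Gamma_m$ is not needed here, and its scalar subtlety is handled separately in Lemma~\ref{lem:stabilizer}).
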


\begin{proof}
Let $x=x_m$ and consider the compact open subset $xK\subset G$.  Its image in
$\Gamma\backslash G$ has measure
\[
  \frac{\vol(xK)}{|\Gamma\cap xKx^{-1}|}=\frac{1}{|\Gamma_m|}.
\]
Indeed, the map $K\to\Gamma\backslash G$, $k\mapsto\Gamma xk$, has fibers exactly the
left cosets of the finite group $x^{-1}\Gamma x\cap K$, whose order is $|\Gamma_m|$; the
finiteness follows because $\Gamma_m$ is a discrete subgroup of the compact group
$xKx^{-1}$.  Pushing forward to the vertex quotient $\Gamma\backslash G/K$ assigns this
mass to the vertex represented by $\Gamma xK$.  Summing over the dominant representatives
from Lemma~\ref{lem:dominant-sector} gives \eqref{eq:volume-stabilizer-sum}.
\end{proof}

\subsection{Block data}

A composition of $d$ is a tuple $a=(a_1,\ldots,a_k)$ of positive integers with
$a_1+\cdots+a_k=d$; the set of all such compositions is denoted by $\Comp(d)$.  Given a
composition, set
\[
  s_i=a_1+\cdots+a_i\qquad (1\le i\le k),\qquad s_0=0.
\]
The cut set of $a$ is
\[
  \Cuts(a)=\{s_1,\ldots,s_{k-1}\}\subset\{1,\ldots,d-1\}.
\]

For a dominant vector $m$, group equal entries into blocks.  Thus there are a composition
$a=(a_1,\ldots,a_k)$ of $d$ and strictly decreasing heights
\[
  b_1>b_2>\cdots>b_{k-1}>b_k=0
\]
such that
\[
  m_{s_{i-1}+1}=\cdots=m_{s_i}=b_i\qquad (1\le i\le k).
\]
The positive height differences are
\begin{equation}\label{eq:y-differences}
  y_i=b_i-b_{i+1}\ge1\qquad (1\le i\le k-1).
\end{equation}
Then $b_i=y_i+\cdots+y_{k-1}$.

\subsection{Stabilizer weights and fixed-block summation}

Let
\begin{equation}\label{eq:Qn}
  Q_0=1,
  \qquad
  Q_n=\prod_{j=1}^n(q^j-1)\quad (n\ge1).
\end{equation}
Thus
\begin{equation}\label{eq:GL-size}
  |\GL_n(\Fq)|=q^{\binom n2}Q_n.
\end{equation}

\begin{lemma}[Stabilizer count]\label{lem:stabilizer}
Let $m$ have block data $(a,b)$ as above.  Then
\[
  |\Gamma_m|
  =
  \frac{\prod_{i=1}^k |\GL_{a_i}(\Fq)|}{q-1}
  \cdot
  q^{\sum_{1\le i<j\le k}a_ia_j(b_i-b_j+1)}.
\]
Equivalently,
\begin{equation}\label{eq:inverse-stabilizer-weight}
  \frac1{|\Gamma_m|}
  =
  q^{-\binom d2}(q-1)
  \left(\prod_{i=1}^k Q_{a_i}^{-1}\right)
  q^{-\sum_{r=1}^{k-1}s_r(d-s_r)y_r}.
\end{equation}
\end{lemma}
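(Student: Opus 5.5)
The plan is to compute $\Gamma_m$ at the level of $\GL_d$ first and then descend to $\PGL_d$. Set $A=\Fq[t]$ and $x=x_m$. For $k\in\GL_d(\OO)$ the $(i,j)$ entry of $xkx^{-1}$ is $t^{m_i-m_j}k_{ij}$. So a matrix $g\in\GL_d(A)$ lies in $x\GL_d(\OO)x^{-1}$ exactly when each $g_{ij}$ is a polynomial with $|g_{ij}|\le q^{m_i-m_j}$. In other words, $\deg g_{ij}\le m_i-m_j$, with $g_{ij}=0$ when $m_i<m_j$. Since $m$ is dominant, the resulting group $H_m=\GL_d(A)\cap x\GL_d(\OO)x^{-1}$ is block upper triangular with respect to the composition $a$:
\begin{itemize}
\item the diagonal blocks have constant entries, because $m_i-m_j=0$ inside a block;
\item the $(i,j)$ block with $i<j$ has arbitrary entries of degree $\le b_i-b_j$;
\item blocks below the diagonal vanish.
\end{itemize}
Conversely, every such matrix whose diagonal blocks are invertible has determinant in $\Fq^*$. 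It therefore lies in $\GL_d(A)$, and its conjugate $x^{-1}gx$ has entries in $\OO$ with unit determinant, so it lies in $\GL_d(\OO)$. This gives
\[
|H_m|=\prod_{i=1}^k|\GL_{a_i}(\Fq)|\cdot q^{\sum_{i<j}a_ia_j(b_i-b_j+1)}.
\]

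Next I pass to $\PGL_d$; this is the step needing the most care. I will use $\PGL_d(A)=\GL_d(A)/\Fq^*$, which holds because $A$ is a PID with unit group $\Fq^*$, and $K=\GL_d(\OO)/\OO^*$. Suppose the class of $g\in\GL_d(A)$ lies in $xKx^{-1}$. Then $g=\lambda xkx^{-1}$ for some $\lambda\in F^*$ and $k\in\GL_d(\OO)$. Taking determinants gives $\lambda^d=\det g/\det k\in\OO^*$, hence $|\lambda|=1$ and $\lambda\in\OO^*$. We may absorb $\lambda$ into $k$, so $g\in H_m$. Hence $\Gamma_m\cong H_m/\Fq^*$, where $\Fq^*$ acts as the scalar matrices, which lie in $H_m$. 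This yields the first formula.

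For the equivalent form \eqref{eq:inverse-stabilizer-weight}, I will substitute \eqref{eq:GL-size} and collect the powers of $q$ that do not involve the $b_i$. These combine to $\sum_i\binom{a_i}2+\sum_{i<j}a_ia_j=\binom d2$. It remains to rewrite the height-dependent part using $b_i-b_j=\sum_{r=i}^{j-1}y_r$ and exchange the order of summation:
\[
\sum_{i<j}a_ia_j(b_i-b_j)=\sum_{r=1}^{k-1}y_r\sum_{i\le r<j}a_ia_j=\sum_{r=1}^{k-1}y_r\,s_r(d-s_r).
\]
Inverting the resulting expression for $|\Gamma_m|$ then gives \eqref{eq:inverse-stabilizer-weight}.
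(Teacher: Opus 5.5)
Your proposal is correct and follows essentially the same route as the paper: you lift to $\GL_d(\Fq[t])$, use the determinant to force the scalar into $\OO^\times$, and read off the block upper-triangular description with degree bounds $b_i-b_j$. You then divide once by the scalars $\Fq^\times$ and finish with the same $\binom d2$ bookkeeping and cut decomposition $\sum_{i<j}a_ia_j(b_i-b_j)=\sum_r s_r(d-s_r)y_r$; your version is only slightly more explicit than the paper's (checking the converse inclusion and $\PGL_d(\Fq[t])=\GL_d(\Fq[t])/\Fq^\times$).
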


\begin{proof}
The condition $\gamma\in\Gamma_m$ is $x_m^{-1}\gamma x_m\in K$.  Although this is a
condition in $\PGL_d$, no additional scalar changes the degree bounds.  Indeed, choose a
representative $\gamma\in\GL_d(\Fq[t])$; then $\det\gamma\in\Fq^\times$.  If
$\lambda x_m^{-1}\gamma x_m$ represents an element of $\GL_d(\OO)$ for some
$\lambda\in F^\times$, then its determinant has absolute value $1$, hence
$|\lambda|^d=1$ and $\lambda\in\OO^\times$.  Thus the projective condition is
equivalent to the ordinary integrality condition $x_m^{-1}\gamma x_m\in\GL_d(\OO)$
up to an $\OO^\times$-scalar.

In block form, entries strictly below the diagonal vanish, diagonal blocks are constant
invertible matrices, and the $(i,j)$-block with $i<j$ is an arbitrary $a_i\times a_j$
matrix with entries in polynomials of degree at most $b_i-b_j$.  Therefore the diagonal
Levi factor is
\[
  \left(\prod_{i=1}^k\GL_{a_i}(\Fq)\right)/\Fq^\times,
\]
so we divide by $q-1$ once, not once per block.  The strictly upper blocks contribute
\[
  q^{\sum_{i<j}a_ia_j(b_i-b_j+1)}.
\]
Using \eqref{eq:GL-size}, the constant part of the inverse stabilizer is
\[
  (q-1)q^{-\sum_i\binom{a_i}{2}-\sum_{i<j}a_ia_j}
  \prod_iQ_{a_i}^{-1}
  =q^{-\binom d2}(q-1)\prod_iQ_{a_i}^{-1}.
\]
Finally, a difference $y_r=b_r-b_{r+1}$ separates the first $s_r$ coordinates from the last
$d-s_r$ coordinates, so
\[
  \sum_{i<j}a_ia_j(b_i-b_j)
  =\sum_{r=1}^{k-1}s_r(d-s_r)y_r.
\]
This proves \eqref{eq:inverse-stabilizer-weight}.
\end{proof}

\begin{corollary}[Fixed block type]\label{cor:fixed-block}
For a fixed composition $a=(a_1,\ldots,a_k)\in\Comp(d)$,
\[
  V(a):=\sum_{m:\,a(m)=a}\frac1{|\Gamma_m|}
\]
is equal to
\begin{equation}\label{eq:Va}
  V(a)
  =q^{-\binom d2}(q-1)
  \left(\prod_{i=1}^k Q_{a_i}^{-1}\right)
  \left(\prod_{i=1}^{k-1}(q^{s_i(d-s_i)}-1)^{-1}\right).
\end{equation}
\end{corollary}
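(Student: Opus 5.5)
The plan is to sum the inverse-stabilizer weight \eqref{eq:inverse-stabilizer-weight} of Lemma~\ref{lem:stabilizer} over all dominant vectors $m$ whose block composition is the fixed $a$. First I will check that these vectors correspond bijectively to tuples $(y_1,\ldots,y_{k-1})\in\ZZ_{\ge1}^{k-1}$. Given $m$ with block type $a$, the heights $b_1>\cdots>b_k=0$ are determined, and hence so are the differences $y_i=b_i-b_{i+1}\ge1$ from \eqref{eq:y-differences}. Conversely, any tuple of positive integers $y_i$ gives heights $b_i=y_i+\cdots+y_{k-1}$. These heights are strictly decreasing with $b_k=0$, so the resulting $m$ has block type exactly $a$. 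Strict decrease is what guarantees that no two blocks merge, so the block type is not coarser than $a$.

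Next, in \eqref{eq:inverse-stabilizer-weight} the prefactor $q^{-\binom d2}(q-1)\prod_iQ_{a_i}^{-1}$ depends only on $a$. The exponent $\sum_r s_r(d-s_r)y_r$ is linear in $y$ with coefficients depending only on $a$ through the partial sums $s_r$. The sum over $y\in\ZZ_{\ge1}^{k-1}$ therefore factors as a product of one-variable geometric series:
\[
  \prod_{r=1}^{k-1}\sum_{y_r\ge1}q^{-s_r(d-s_r)y_r}
  =\prod_{r=1}^{k-1}\frac{q^{-s_r(d-s_r)}}{1-q^{-s_r(d-s_r)}}
  =\prod_{r=1}^{k-1}\frac{1}{q^{s_r(d-s_r)}-1}.
\]
Each series converges because $1\le s_r\le d-1$, which gives $s_r(d-s_r)\ge d-1\ge1$. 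All terms are positive, so the interchange of sum and product (Tonelli) is harmless. Multiplying by the prefactor gives \eqref{eq:Va}. When $k=1$, the only $m$ is $m=0$, the empty product is $1$, and the formula still holds.

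I do not expect a real obstacle. The only point needing care is the bijection between $m$ of type $a$ and positive tuples $y$, so that no vector is counted twice or missed. Lemma~\ref{lem:dominant-sector} supplies the uniqueness of the dominant representative needed for this.
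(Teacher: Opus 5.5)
Your proposal is correct and follows the paper's proof. Both arguments parametrize the vectors of block type $a$ by $y\in\ZZ_{\ge1}^{k-1}$ and factor the weight \eqref{eq:inverse-stabilizer-weight} into one geometric series $\sum_{y_r\ge1}q^{-s_r(d-s_r)y_r}=(q^{s_r(d-s_r)}-1)^{-1}$ per cut; your bijection is purely combinatorial and does not need Lemma~\ref{lem:dominant-sector}, which matters only later when the $V(a)$ are summed to $\nu(Y)$.
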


\begin{proof}
By \eqref{eq:inverse-stabilizer-weight} and $y_i\ge1$,
\[
  \sum_{y_i\ge1}q^{-s_i(d-s_i)y_i}
  =\frac{q^{-s_i(d-s_i)}}{1-q^{-s_i(d-s_i)}}
  =(q^{s_i(d-s_i)}-1)^{-1}.
\]
Multiplying these geometric series over all cuts gives \eqref{eq:Va}.
\end{proof}

\subsection{The cut-set dynamic program}

We now evaluate the remaining finite sum over compositions and thereby complete the
volume calculation.  Define
\begin{equation}\label{eq:CdAB}
  C_d=q^{-\binom d2}(q-1),
  \qquad
  A(\ell)=Q_\ell^{-1}\quad (1\le\ell\le d),
  \qquad
  B(s)=(q^{s(d-s)}-1)^{-1}\quad (1\le s\le d-1).
\end{equation}
For $a=(a_1,\ldots,a_k)\in\Comp(d)$ define its canonical weight by
\begin{equation}\label{eq:canonical-weight}
  W(a)=\left(\prod_{i=1}^k A(a_i)\right)
       \left(\prod_{s\in\Cuts(a)}B(s)\right).
\end{equation}
Then Corollary~\ref{cor:fixed-block} gives
\begin{equation}\label{eq:volume-Cd-Wsum}
  \nu(Y)=C_d\sum_{a\in\Comp(d)}W(a).
\end{equation}

\begin{proposition}[Cut-set dynamic program]\label{prop:cut-dp}
Define $F_d(0),F_d(1),\ldots,F_d(d-1)$ by
\begin{equation}\label{eq:Fd-recursion}
  F_d(0)=1,
  \qquad
  F_d(n)=B(n)\sum_{\ell=1}^nF_d(n-\ell)A(\ell)
  \quad (1\le n\le d-1).
\end{equation}
Then
\begin{equation}\label{eq:terminal-DP}
  \sum_{a\in\Comp(d)}W(a)
  =\sum_{\ell=1}^dF_d(d-\ell)A(\ell).
\end{equation}
\end{proposition}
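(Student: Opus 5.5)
We prove the identity by showing that $F_d(n)$ is a partial sum of weights over compositions of $n$, and then reading off the terminal step. For $0\le n\le d-1$, let $\Comp(n)$ be the set of compositions of $n$, where $\Comp(0)$ contains only the empty composition. For $c=(c_1,\ldots,c_j)\in\Comp(n)$, define the \emph{closed partial weight}
\[
  W_n(c)=\Bigl(\prod_{i=1}^j A(c_i)\Bigr)\Bigl(\prod_{r=1}^{j}B(c_1+\cdots+c_r)\Bigr).
\]
This is the weight of a prefix whose last partial sum $n$ is itself counted as a cut. Note that the factor $B$ always carries the ambient $d$, so $B(n)$ makes sense for $1\le n\le d-1$. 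The claim is that
\[
  F_d(n)=\sum_{c\in\Comp(n)}W_n(c)\qquad(0\le n\le d-1).
\]

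We prove the claim by strong induction on $n$.

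The base case $n=0$ is immediate: the empty composition has empty products, so $W_0=1=F_d(0)$.

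For $1\le n\le d-1$, decompose each nonempty $c\in\Comp(n)$ by its last part $\ell=c_j\in\{1,\ldots,n\}$. Removing that part leaves $c'\in\Comp(n-\ell)$, and this gives a bijection
\[
  \Comp(n)\longleftrightarrow\bigsqcup_{\ell=1}^n\{\ell\}\times\Comp(n-\ell).
\]
Under this bijection the factor $A(\ell)B(n)$ is exactly what is added, because the partial sums of $c'$ are the first $j-1$ partial sums of $c$. Hence $W_n(c)=B(n)A(\ell)\,W_{n-\ell}(c')$. Summing over $c$ and applying the induction hypothesis reproduces the recursion \eqref{eq:Fd-recursion}.

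For the terminal step, decompose $a\in\Comp(d)$ in the same way by its last part $\ell=a_k$, with $1\le\ell\le d$, and remaining prefix $c\in\Comp(d-\ell)$. The cuts of $a$ are $\{s_1,\ldots,s_{k-1}\}$, and these are precisely all partial sums of $c$, including its final one $d-\ell$ when $\ell<d$. The full sum $s_k=d$ is not a cut and receives no $B$ factor. Therefore $W(a)=A(\ell)W_{d-\ell}(c)$. When $\ell=d$ the prefix is empty and $W(a)=A(d)=F_d(0)A(d)$, consistent with this formula. Summing over $\ell$ and $c$ and invoking the claim gives \eqref{eq:terminal-DP}.

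The only delicate point is the bookkeeping of which partial sums carry a $B$ factor. Intermediate states $n<d$ close with $B(n)$, while the terminal state $d$ does not, since $B(d)$ is undefined and $d\notin\Cuts(a)$. This asymmetry is exactly why the recursion multiplies by $B(n)$ only for $n\le d-1$, and the definition of $W_n$ is chosen to match it. Otherwise the argument is a standard transfer-matrix, or path-counting, decomposition on the path $0\to d$.
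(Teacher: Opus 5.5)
Your proof is correct and follows the paper's approach: the paper likewise interprets $F_d(n)$ as the total weight of paths $0\to n$ with the cut weight $B(n)$ included at the terminal point, derives the recursion by splitting off the last block, and obtains the terminal identity by omitting $B(d)$ because $d\notin\Cuts(a)$. Your closed partial weight $W_n$ and the explicit induction simply make that path-weight description precise.
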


\begin{proof}
A composition $a=(a_1,\ldots,a_k)$ is equivalently a path
\[
  0=s_0<s_1<\cdots<s_{k-1}<s_k=d,
  \qquad a_i=s_i-s_{i-1}.
\]
For $n<d$, let $F_d(n)$ be the total weight of paths from $0$ to $n$, including the cut
weight $B(n)$ at the terminal point $n$.  Decomposing such a path by its last block length
$\ell$ gives exactly \eqref{eq:Fd-recursion}.  For a full path ending at $d$, the last block has
length $\ell$, the initial path ends at $d-\ell$ and contributes $F_d(d-\ell)$, and the final
block contributes $A(\ell)$.  Since $d$ is not a cut, there is no factor $B(d)$.  Summing over
$\ell$ proves \eqref{eq:terminal-DP}.
\end{proof}

\begin{lemma}[Closed form for the dynamic program]\label{lem:closed-F}
For $0\le n\le d-1$,
\begin{equation}\label{eq:Fd-closed}
  F_d(n)=\frac{q^{\binom n2}Q_{d-n-1}}{Q_nQ_{d-1}}.
\end{equation}
Consequently,
\begin{equation}\label{eq:Wsum-closed}
  \sum_{a\in\Comp(d)}W(a)
  =\frac{dq^{\binom d2}}{Q_dQ_{d-1}}.
\end{equation}
\end{lemma}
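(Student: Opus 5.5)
We prove \eqref{eq:Fd-closed} by strong induction on $n$, verifying directly that the proposed closed form satisfies the recursion \eqref{eq:Fd-recursion}. Then we substitute it into \eqref{eq:terminal-DP}. The base case $n=0$ is immediate, since $q^0Q_{d-1}/(Q_0Q_{d-1})=1$. For $1\le n\le d-1$, the inductive hypothesis gives
\[
  \sum_{\ell=1}^n F_d(n-\ell)A(\ell)
  =\frac{1}{Q_{d-1}}\sum_{j=0}^{n-1}\frac{q^{\binom j2}Q_{d-j-1}}{Q_jQ_{n-j}},
\]
where $j=n-\ell$. Multiplying by $B(n)=(q^{n(d-n)}-1)^{-1}$, the claim reduces to the identity
\begin{equation*}
  \sum_{j=0}^{n-1}\frac{q^{\binom j2}Q_{d-j-1}}{Q_jQ_{n-j}}
  =(q^{n(d-n)}-1)\frac{q^{\binom n2}Q_{d-n-1}}{Q_n}.
\end{equation*}

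To prove it, we multiply through by $Q_n/Q_{d-n-1}$ and rewrite everything with Gaussian binomials $\binom{n}{j}_q=Q_n/(Q_jQ_{n-j})$. The ratio $Q_{d-j-1}/Q_{d-n-1}=\prod_{i=d-n}^{d-j-1}(q^i-1)$ is a product of $n-j$ factors. The $j=n$ term, which would be $q^{\binom n2}$, is missing from the left side. Adding it to both sides turns the identity into
\[
  \sum_{j=0}^{n}\binom{n}{j}_q q^{\binom j2}\prod_{i=d-n}^{d-j-1}(q^i-1)=q^{\binom n2}q^{n(d-n)}.
\]
With $x=q^{d-n}$, the product equals $\prod_{i=0}^{n-j-1}(xq^i-1)$. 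So the target is a $q$-binomial (Vandermonde/Cauchy-type) identity:
\[
  \sum_{j=0}^n\binom nj_q q^{\binom j2}\prod_{i=0}^{n-j-1}(xq^i-1)=q^{\binom n2}x^n.
\]
The cleanest route is a counting argument. The left side counts the $n\times N$ matrices over $\Fq$, with $x=q^N$, classified by a suitable flag or kernel, and the right side counts them directly. A more robust alternative is to treat both sides as polynomials in $x$ of degree $n$ and check agreement at the $n+1$ points $x=q^{-k}$, $0\le k\le n$. At these points the product vanishes unless $n-j\le k$, and what survives is a finite $q$-binomial theorem evaluation. One could also verify the recursion in $n$ via the Pascal rule for $\binom nj_q$. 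I expect establishing this $q$-identity, with correct bookkeeping of the powers $q^{\binom j2}$, to be the main obstacle. Before committing, I would test it at $n=1,2$: the case $n=1$ gives $(x-1)+1=x$, which is correct.

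For \eqref{eq:Wsum-closed}, we substitute the closed form into \eqref{eq:terminal-DP}, which gives
\[
  \frac{1}{Q_{d-1}}\sum_{j=0}^{d-1}\frac{q^{\binom j2}Q_{d-j-1}}{Q_jQ_{d-j}}.
\]
This is the left side of the same identity with $n=d$, except that $q^{n(d-n)}-1$ is now zero. So the previous manipulation does not apply directly, and we use a limiting version instead. Multiplying by $Q_d$ turns the sum into $\sum_j\binom dj_q q^{\binom j2}\prod_{i=0}^{d-j-1}(q^i-1)\cdot(q^{0}-1)^{-1}$-type terms, so the naive substitution $x=1$ degenerates. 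We handle this by differentiating the identity in $x$ at $x=1$, or by applying the general polynomial identity with the factor $(xq^0-1)$ removed. Concretely, we divide the polynomial identity, minus its $j=n$ term, by $(x-1)$ and let $x\to1$. The right side then becomes $\frac{d}{dx}\bigl(q^{\binom d2}x^d\bigr)\big|_{x=1}=dq^{\binom d2}$, which produces the factor $d$. Dividing by $Q_dQ_{d-1}$ gives $dq^{\binom d2}/(Q_dQ_{d-1})$.
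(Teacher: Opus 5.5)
Your reductions are correct and follow the paper's route. The induction reduces to the same convolution \eqref{eq:q-conv-induction}. Your polynomial identity
\[
  P_n(x):=\sum_{j=0}^{n}\binom{n}{j}_q q^{\binom j2}\prod_{i=0}^{n-j-1}(xq^i-1)=q^{\binom n2}x^n
\]
is exactly Lemma~\ref{lem:appendix-q-convolution} multiplied by $(-1)^n(q;q)_n$, with $a=x$. Your terminal step (drop the $j=d$ term, divide by $x-1$, set $x=1$) is the same as the paper's differentiation in $a$ at $a=1$.

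\textbf{The gap.} The identity $P_n(x)=q^{\binom n2}x^n$ is never proved, and all the content of the lemma sits there. Two of your three suggested routes do not work as described.
\begin{enumerate}[label=(\roman*)]
\item Counting. The right side $q^{\binom n2}q^{nN}$ is not the number of $n\times N$ matrices. The factors $q^{N+i}-1$ also do not count injective maps into $\Fq^N$. Classifying linear maps $\Fq^n\to\Fq^N$ by kernel gives Gauss's identity $x^n=\sum_k\binom{n}{k}_q\prod_{i<k}(x-q^i)$. That becomes yours only after substituting $q\mapsto q^{-1}$ and using $\binom{n}{k}_{q^{-1}}=q^{-k(n-k)}\binom{n}{k}_q$. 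This is an extra polynomial-identity-in-$q$ step, since $q^{-1}$ is not a prime power.
\item Interpolation. At $x=q^{-k}$ the surviving terms sum to
\[
  \sum_{m=0}^{k}(-1)^m q^{\binom{n-m}2+\binom m2-mk}\binom{k}{m}_q\,\frac{Q_n}{Q_{n-m}}.
\]
This is your identity specialized at $x=q^{-k}$, a terminating $q$-Chu--Vandermonde sum. It is not a plain $q$-binomial-theorem evaluation and is no easier than the original.
\end{enumerate}

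\textbf{The fix.} Your third route, the Pascal recursion, closes the gap in one line. Split $\binom{n}{j}_q=\binom{n-1}{j-1}_q+q^j\binom{n-1}{j}_q$. In the first piece shift $j\mapsto j+1$, so that $q^{\binom{j+1}2}=q^{\binom j2+j}$. In the second piece peel off the last factor $xq^{n-j-1}-1$ of the product. The two pieces combine to
\[
  P_n(x)=\sum_{j=0}^{n-1}\binom{n-1}{j}_q\,q^{\binom j2+j}\prod_{i=0}^{n-j-2}(xq^i-1)\,\bigl[1+(xq^{n-j-1}-1)\bigr]=q^{n-1}x\,P_{n-1}(x).
\]
Hence $P_n(x)=q^{\binom n2}x^n$ by induction from $P_0=1$.

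With this inserted, your proof is complete. Compare the paper's proof of Lemma~\ref{lem:appendix-q-convolution}: it extracts coefficients from $(T;q)_\infty\cdot(aT;q)_\infty/(T;q)_\infty$ for $|q|<1$, then extends to the prime power $q>1$ by rationality. The recursion above is purely algebraic and needs no continuation in $q$.
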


\begin{proof}
The formula \eqref{eq:Fd-closed} is immediate for $n=0$.  Substituting the right-hand side
of \eqref{eq:Fd-closed} into the recursion \eqref{eq:Fd-recursion} reduces the induction
step to the finite convolution
\begin{equation}\label{eq:q-conv-induction}
  \sum_{r=0}^{n-1}
  \frac{q^{\binom r2}Q_{d-r-1}}{Q_rQ_{n-r}}
  =
  \frac{q^{\binom n2}Q_{d-n-1}}{Q_n}
  (q^{n(d-n)}-1).
\end{equation}
which is Lemma~\ref{lem:appendix-first-q-identity}.  This proves \eqref{eq:Fd-closed} by
induction.

Using Proposition~\ref{prop:cut-dp} and \eqref{eq:Fd-closed},
\[
  \sum_{a\in\Comp(d)}W(a)
  =
  \frac1{Q_{d-1}}
  \sum_{n=0}^{d-1}\frac{q^{\binom n2}}{Q_n(q^{d-n}-1)}.
\]
The remaining finite identity
\begin{equation}\label{eq:q-conv-terminal}
  \sum_{n=0}^{d-1}\frac{q^{\binom n2}}{Q_n(q^{d-n}-1)}
  =\frac{dq^{\binom d2}}{Q_d}.
\end{equation}
is Lemma~\ref{lem:appendix-terminal-q-identity}.  This gives \eqref{eq:Wsum-closed}.
\end{proof}

\begin{proof}[Proof of Theorem~\ref{thm:volume}]
By \eqref{eq:volume-Cd-Wsum} and \eqref{eq:Wsum-closed},
\[
  \nu(Y)
  =q^{-\binom d2}(q-1)\frac{dq^{\binom d2}}{Q_dQ_{d-1}}
  =\frac{d(q-1)}{Q_dQ_{d-1}}.
\]
Since $Q_{d-1}=(q-1)\prod_{m=2}^{d-1}(q^m-1)$, this is exactly the stated formula.
\end{proof}

\section{The alpha function in dominant coordinates}

\subsection{Trace-zero normalization and the diagonal formula}

Recall that \(\alpha\) was defined in \eqref{eq:alpha-def-intro} on homothety
classes of \(\Fq[t]\)-lattices and transferred to \(Y\) through the inversion map
\(\Gamma gK\mapsto Kg^{-1}\Gamma\).  We now express this height in dominant-sector
coordinates.

The volume calculation used the $\PGL_d$ representative $m_d=0$.  The normalized height
uses the trace-zero vector
\begin{equation}\label{eq:n-trace-zero}
  n_i=m_i-\bar m,
  \qquad
  \bar m=\frac1d\sum_{\ell=1}^dm_\ell.
\end{equation}
Then
\[
  n_1\ge\cdots\ge n_d,
  \qquad
  \sum_{i=1}^dn_i=0.
\]
The $n_i$ may lie in $d^{-1}\ZZ$, but all differences $n_i-n_{i+1}=m_i-m_{i+1}$ are
integers.

For $0\le j\le d$, define the partial sums
\begin{equation}\label{eq:Sj-H}
  S_j(n)=n_1+\cdots+n_j,
  \qquad S_0(n)=S_d(n)=0,
\end{equation}
and define
\begin{equation}\label{eq:H-def}
  H(n)=\max_{0\le j\le d}S_j(n).
\end{equation}
The auxiliary index \(j=0\) contributes \(S_0=0\), while the \(j=d\) term in
\eqref{eq:alpha-def-intro} is identically \(1=q^{S_d}\).  Thus adjoining \(j=0\) does not
change the maximum and records the two neutral endpoints of the partial-sum profile.

The polygonal profile \(j\mapsto S_j(n)\) is closely related to the instability polygons
used in reduction theory via semistability; compare Grayson~\cite{Grayson}.  We only use
its elementary concavity and do not invoke the general semistability formalism.

\begin{proposition}[Diagonal formula for $\alpha$]\label{prop:alpha-qH}
Let $m$ satisfy \eqref{eq:dominant-m}, and let $n$ be defined by \eqref{eq:n-trace-zero}.
For the vertex represented by $\Gamma x_mK$,
\begin{equation}\label{eq:alpha-qH}
  \alpha(\Gamma x_mK)=q^{H(n)}.
\end{equation}
\end{proposition}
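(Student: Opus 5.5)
The plan is to compute $\alpha$ directly from its definition \eqref{eq:alpha-def-intro} on the explicit lattice attached to $\Gamma x_mK$. Through the inversion map, this vertex corresponds to the homothety class of
\[
  \Lambda=x_m^{-1}\Lambda_0=\bigoplus_{i=1}^d t^{-m_i}\Fq[t]\,e_i .
\]
With the normalization $\covol(g\Lambda_0)=|\det g|$ we get $\covol(\Lambda)=q^{-\sum_i m_i}=q^{-d\bar m}$, and hence $\covol(\Lambda)^{j/d}=q^{-j\bar m}$. The whole statement therefore reduces to one claim about lattice minima:
\[
  \min\{\|v_1\wedge\cdots\wedge v_j\| : v_1,\ldots,v_j\in\Lambda \text{ independent}\}=q^{-(m_1+\cdots+m_j)} .
\]
Granting this, the $j$-th term in \eqref{eq:alpha-def-intro} equals $q^{(m_1+\cdots+m_j)-j\bar m}=q^{S_j(n)}$. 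Taking the maximum over $1\le j\le d$, and using the remark after \eqref{eq:H-def} that adjoining $j=0$ changes nothing, gives $\alpha=q^{H(n)}$.

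\emph{Upper bound on the minimum.} Take $v_i=t^{-m_i}e_i$ for $1\le i\le j$. Then $v_1\wedge\cdots\wedge v_j=t^{-(m_1+\cdots+m_j)}e_1\wedge\cdots\wedge e_j$, whose sup norm is exactly $q^{-(m_1+\cdots+m_j)}$.

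\emph{Lower bound on the minimum.} Any wedge of $j$ vectors of $\Lambda$ lies in
\[
  \textstyle\bigwedge^j\Lambda=\bigoplus_{|I|=j}t^{-m_I}\Fq[t]\,e_I,\qquad m_I=\sum_{i\in I}m_i .
\]
This holds because the Plücker coordinate $e_I$ of the wedge is a $j\times j$ minor whose $i$-th row lies in $t^{-m_i}\Fq[t]$. If $v_1,\ldots,v_j$ are independent, some coordinate, say the $e_I$ one, is nonzero. That coordinate is a nonzero polynomial times $t^{-m_I}$, so its absolute value is at least $q^{-m_I}$. Since $m$ is dominant ($m_1\ge\cdots\ge m_d$), we have $m_I\le m_1+\cdots+m_j$, and therefore $\|v_1\wedge\cdots\wedge v_j\|\ge q^{-(m_1+\cdots+m_j)}$.

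I expect the only delicate point to be bookkeeping rather than estimates. One must check that the inversion direction ($g^{-1}\Lambda_0$ rather than $g\Lambda_0$) and the sign convention $|t|=q$ combine to give exponents $-m_i$. With the opposite sign the answer would instead involve suffix sums and the minimum of the $S_j$, so this is where I would recheck against the definitions. The trace-zero shift is harmless: the expression is invariant under $m\mapsto m+c(1,\ldots,1)$, because both $\covol^{j/d}$ and the wedge norm scale by $q^{-jc}$, consistent with $\alpha$ being well defined on homothety classes.
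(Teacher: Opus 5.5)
Your proposal is correct and follows essentially the same argument as the paper. The basis $t^{-m_i}e_i$ of $x_m^{-1}\Lambda_0$ attains the minimal $j$-fold wedge norm $q^{-(m_1+\cdots+m_j)}$; the matching lower bound comes from a nonzero Pl\"ucker minor with polynomial entries together with dominance of $m$; and $\covol(\Lambda)^{j/d}=q^{-j\bar m}$ turns the $j$-th term into $q^{S_j(n)}$, so taking the maximum gives $q^{H(n)}$, exactly as in the paper's proof.
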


\begin{proof}
By \eqref{eq:alpha-def-intro} and the inversion convention following it, the value
at \(\Gamma x_mK\) is the normalized height of the homothety class of
\(x_m^{-1}\Lambda_0\).  Let
\[
  \Lambda=x_m^{-1}\Lambda_0.
\]
A basis of \(\Lambda\) is
\[
  v_i=t^{-m_i}e_i\qquad (1\le i\le d).
\]
For fixed \(j\), the smallest possible norm of a wedge of \(j\) independent vectors in
\(\Lambda\) is
\[
  q^{-(m_1+\cdots+m_j)}.
\]
Indeed, equality is attained by \(v_1\wedge\cdots\wedge v_j\).  Conversely, if
\(w_1,\ldots,w_j\in\Lambda\) are independent, their Pl\"ucker coordinates in the basis
\(v_i\) are minors with entries in \(\Fq[t]\).  At least one such minor is nonzero and
therefore has norm at least \(1\).  Since \(m_1\ge\cdots\ge m_d\), the sum of the
exponents attached to any \(j\)-subset is at most \(m_1+\cdots+m_j\).  Hence every
nonzero wedge has norm at least \(q^{-(m_1+\cdots+m_j)}\).

The covolume of \(\Lambda=x_m^{-1}\Lambda_0\) is \(q^{-\sum_i m_i}\).  Therefore the
\(j\)-th normalized contribution to \(\alpha\) is
\[
  \covol(\Lambda)^{j/d} q^{m_1+\cdots+m_j}
  =q^{-j\bar m+m_1+\cdots+m_j}
  =q^{S_j(n)}.
\]
Taking the maximum over \(0\le j\le d\) gives \eqref{eq:alpha-qH}.
\end{proof}

\subsection{Difference coordinates and the modular exponent}

To compare the cusp height with stabilizer decay, we now pass to the simple-root
difference coordinates of type \(A_{d-1}\).  Define
\begin{equation}\label{eq:delta-coordinates}
  \delta_i=m_i-m_{i+1}=n_i-n_{i+1}\in\ZZ_{\ge0},
  \qquad 1\le i\le d-1.
\end{equation}
The support of $\delta$ is
\[
  \supp(\delta)=\{i:\delta_i>0\}\subset\{1,\ldots,d-1\}.
\]
This support is exactly the cut set of the associated composition.  If
$I=\{s_1<\cdots<s_{k-1}\}$, then the block lengths are
\[
  \Blocks(I)=(s_1,s_2-s_1,\ldots,d-s_{k-1}).
\]
For $1\le i\le d-1$, put
\begin{equation}\label{eq:c_i}
  c_i=i(d-i).
\end{equation}
The inverse stabilizer weight can then be written as
\begin{equation}\label{eq:weight-delta}
  \frac1{|\Gamma_m|}=C(\supp\delta)q^{-\Phi(\delta)},
\end{equation}
where
\begin{equation}\label{eq:C-I-Phi}
  C(I)=q^{-\binom d2}(q-1)\prod_{\ell\in\Blocks(I)}Q_\ell^{-1},
  \qquad
  \Phi(\delta)=\sum_{i=1}^{d-1}c_i\delta_i.
\end{equation}
Here $I\subset\{1,\ldots,d-1\}$ and $\Blocks(I)$ denotes the list of block lengths cut out
by $I$; for $I=\varnothing$, one has $\Blocks(I)=(d)$.

The partial sums $S_j$ are linear functions of $\delta$.  More precisely,
\begin{equation}\label{eq:inverse-Cartan-coeff}
  dS_j(\delta)=\sum_{i=1}^{d-1}b_{ji}\delta_i,
  \qquad
  b_{ji}=\min(i,j)(d-\max(i,j)).
\end{equation}
The matrix $(b_{ji}/d)$ is the inverse Cartan matrix of type $A_{d-1}$.  From this
point on, $S_j(\delta)$ denotes the linear form defined by
\eqref{eq:inverse-Cartan-coeff}; we also put
\begin{equation}\label{eq:H-delta-def}
  H(\delta)=\max_{0\le j\le d}S_j(\delta),
  \qquad S_0(\delta)=S_d(\delta)=0.
\end{equation}
This agrees with $S_j(n)$ and $H(n)$ for the trace-zero vector associated with
$\delta$.

\begin{lemma}[Modular inequality]\label{lem:Phi-dH}
For every $\delta\in\ZZ_{\ge0}^{d-1}$,
\begin{equation}\label{eq:Phi-ge-dH}
  \Phi(\delta)\ge dH(\delta).
\end{equation}
Equality holds for nonzero $\delta$ if and only if $\delta$ lies on one of the rank-one rays
$\RR_{>0}e_j$, $1\le j\le d-1$, where $e_j$ is the $j$-th standard basis vector of
$\RR^{d-1}$.
\end{lemma}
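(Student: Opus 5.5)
Since $H(\delta)=\max_{0\le j\le d}S_j(\delta)$ and $S_0=S_d=0\le\Phi(\delta)$, the inequality \eqref{eq:Phi-ge-dH} is equivalent to the family of inequalities $\Phi(\delta)\ge dS_j(\delta)$ for $1\le j\le d-1$. We compare coefficients of $\delta_i$ using \eqref{eq:inverse-Cartan-coeff}. Fix $j$. For $i\le j$ we have $c_i-b_{ji}=i(d-i)-i(d-j)=i(j-i)\ge0$, and for $i\ge j$ we have $c_i-b_{ji}=i(d-i)-j(d-i)=(d-i)(i-j)\ge0$. Hence
\[
  \Phi(\delta)-dS_j(\delta)=\sum_{i=1}^{d-1}|i-j|\min(i,d-i+\dots)\,\delta_i ,
\]
or, written precisely, $\sum_{i<j}i(j-i)\delta_i+\sum_{i>j}(d-i)(i-j)\delta_i$. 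Every coefficient is nonnegative because $\delta_i\ge0$, so the inequality holds. In fact it holds for all real $\delta\in\RR_{\ge0}^{d-1}$, which we note for later use.

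For the equality case, suppose $\delta\neq0$ and $\Phi(\delta)=dH(\delta)$. Since $\Phi(\delta)\ge c_i\delta_i>0$ for some $i$, we get $H(\delta)>0$, so the maximum is attained at some $j$ with $1\le j\le d-1$. Equality $\Phi=dS_j$ forces $\delta_i=0$ for every $i\ne j$, because the coefficients $i(j-i)$ for $i<j$ and $(d-i)(i-j)$ for $i>j$ are strictly positive. Thus $\delta\in\RR_{>0}e_j$.

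Conversely, let $\delta=te_j$ with $t>0$. Then $\Phi=c_jt=j(d-j)t=b_{jj}t=dS_j(\delta)\le dH(\delta)$, and combined with the inequality this gives equality. (One may also check directly that $b_{ji}\le b_{jj}$, so $S_j$ is the maximum.)

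The only real step is the coefficientwise comparison above, which is elementary. I expect no obstacle beyond keeping the indexing of $b_{ji}$ straight.
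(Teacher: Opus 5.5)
Your proof is correct, but it takes a different route from the paper's.

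\textbf{The paper's route.} The paper passes to the trace-zero vector $n$ and rewrites $\Phi(\delta)=\sum_{i<j}(n_i-n_j)=2\sum_{j=1}^{d-1}S_j(n)$. It then uses concavity of the partial-sum profile $j\mapsto S_j$. If $S_r=H$, the profile lies above the tent with vertices $(0,0)$, $(r,H)$, $(d,0)$, and the tent's values over $1\le j\le d-1$ sum to $dH/2$. Equality forces the profile to be that tent. The profile then has a single change of slope, so $\delta\in\RR_{>0}e_r$.

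\textbf{Your route.} You compare coefficients in $\Phi-dS_j$ for each fixed $j$. That computation is exactly the paper's next result, Lemma~\ref{lem:transverse-gap}, the transverse gap $R_j$. In effect you prove that lemma first and deduce the modular inequality from it. You then read the equality case off the strict positivity of the transverse coefficients $i(j-i)$ and $(i-j)(d-i)$. Your step that $\Phi(\delta)>0$ forces $H(\delta)>0$, so the maximizer is an interior index $j$, is exactly what that reduction needs.

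\textbf{What each buys.} Your argument is more elementary. It also gives the stronger chamber-by-chamber statement $\Phi\ge dS_j$ for every $j$, with an explicit nonnegative remainder, which is what the tail estimate and the pole-coefficient computation later use. The paper's argument is more conceptual. It ties $\Phi$ to the total mass of the concave partial-sum polygon (the instability-polygon picture) and explains the equality case geometrically as that polygon being a tent.

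\textbf{One cosmetic fix.} Delete your first display containing $\min(i,d-i+\dots)$; it is not a well-defined formula. The expression you give immediately after, $\sum_{i<j}i(j-i)\delta_i+\sum_{i>j}(d-i)(i-j)\delta_i$, is the correct one and should stand alone.
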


\begin{proof}
Let $n$ be the trace-zero vector associated with $\delta$.  A direct computation gives
\[
  \Phi(\delta)=\sum_{1\le i<j\le d}(n_i-n_j)=2\sum_{j=1}^{d-1}S_j(n).
\]
The sequence $S_0,S_1,\ldots,S_d$ is concave because
$S_j-S_{j-1}=n_j$ and $n_1\ge\cdots\ge n_d$.  Also $S_0=S_d=0$.  If $S_r=H$, concavity
forces the graph of $S_j$ to lie above the discrete tent function with vertices
$(0,0),(r,H),(d,0)$.  The sum of the tent values over $j=1,\ldots,d-1$ is $dH/2$.  Hence
\[
  \Phi(\delta)=2\sum_{j=1}^{d-1}S_j(n)\ge dH(n).
\]
Equality occurs precisely when the partial-sum graph is this tent function.  Equivalently,
the slopes $n_j=S_j-S_{j-1}$ are constant for $j\le r$ and constant for $j>r$, so the only
nonzero difference $n_j-n_{j+1}$ is $\delta_r$.  This is exactly the ray $\RR_{>0}e_r$.
\end{proof}

For later use we record the same inequality in coordinates on a chamber where a fixed
partial sum realizes the maximum.

\begin{lemma}[Transverse gap]\label{lem:transverse-gap}
Fix $1\le j\le d-1$.  For every $\delta\in\RR_{\ge0}^{d-1}$,
\begin{equation}\label{eq:Rj-formula}
  \Phi(\delta)-dS_j(\delta)
  =
  \sum_{i<j}i(j-i)\delta_i+
  \sum_{i>j}(i-j)(d-i)\delta_i.
\end{equation}
In particular this linear form is nonnegative and vanishes exactly on the ray
$\RR_{\ge0}e_j$.
\end{lemma}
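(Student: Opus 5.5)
Both sides of \eqref{eq:Rj-formula} are linear in $\delta$, so the plan is to compare coefficients of each $\delta_i$. We know $\Phi(\delta)=\sum_i c_i\delta_i$ with $c_i=i(d-i)$ from \eqref{eq:C-I-Phi}, and $dS_j(\delta)=\sum_i b_{ji}\delta_i$ with $b_{ji}=\min(i,j)(d-\max(i,j))$ from \eqref{eq:inverse-Cartan-coeff}. The coefficient of $\delta_i$ on the left is therefore $i(d-i)-\min(i,j)(d-\max(i,j))$, and we compute it in three cases.

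If $i<j$, the coefficient is $i(d-i)-i(d-j)=i(j-i)$. If $i>j$, it is $i(d-i)-j(d-i)=(i-j)(d-i)$. If $i=j$, it is $j(d-j)-j(d-j)=0$. This is exactly the right-hand side of \eqref{eq:Rj-formula}. If one wants to be careful about \eqref{eq:inverse-Cartan-coeff}, it can be re-derived quickly. Since $n_k=\sum_{i\ge k}\delta_i-\frac1d\sum_i i\delta_i$, one has $S_j=\sum_i(\min(i,j)-ji/d)\delta_i$, and $d\min(i,j)-ij=\min(i,j)(d-\max(i,j))$.

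For the final assertion, note that for $1\le i<j$ the coefficient $i(j-i)$ is strictly positive. For $j<i\le d-1$ the coefficient $(i-j)(d-i)$ is also strictly positive, since $i\le d-1$ gives $d-i\ge1$. The coefficient of $\delta_j$ vanishes. Hence on $\RR_{\ge0}^{d-1}$ the form is a sum of nonnegative terms. It vanishes if and only if $\delta_i=0$ for every $i\ne j$, that is, if and only if $\delta\in\RR_{\ge0}e_j$.

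The computation is routine. The only point needing care is the boundary index $i=d-1$ in the second sum, where positivity of $d-i$ is what guarantees that the vanishing locus is exactly the ray and nothing larger.
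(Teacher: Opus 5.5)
Your proposal is correct and follows the paper's proof: compare the coefficient $c_i-b_{ji}$ of each $\delta_i$, splitting into the cases $i<j$ and $i\ge j$. You go slightly further than the paper by re-deriving the coefficients $b_{ji}$ and by explicitly checking strict positivity of the transverse coefficients, including $d-i\ge1$ at $i=d-1$; the paper leaves the ``in particular'' clause to the reader.
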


\begin{proof}
Using \eqref{eq:c_i} and \eqref{eq:inverse-Cartan-coeff}, the coefficient of $\delta_i$ in
$\Phi-dS_j$ is $c_i-b_{ji}$.  If $i\le j$, this is
\[
  i(d-i)-i(d-j)=i(j-i),
\]
and if $i\ge j$, it is
\[
  i(d-i)-j(d-i)=(i-j)(d-i).
\]
This proves the formula.
\end{proof}

\section{Integrability and cusp tails}

\subsection{The critical integrability exponent}

The modular inequality now gives both the exact \(L^r\)-threshold and the decay
rate of the alpha cusp.  We begin with integrability.

Since $\nu$ is the quotient measure obtained from $\vol(K)=1$, equations
\eqref{eq:weight-delta} and \eqref{eq:alpha-qH} give, for real $r>0$,
\begin{equation}\label{eq:alpha-integral-sum}
  \int_Y\alpha(x)^r\,d\nu(x)
  =
  \sum_{\delta\in\ZZ_{\ge0}^{d-1}}
  C(\supp\delta)q^{rH(\delta)-\Phi(\delta)}.
\end{equation}
The term $\delta=0$ is finite.

\begin{proof}[Proof of Theorem~\ref{thm:integrability}]
Assume $0<r<d$.  Since there are only finitely many support patterns, the constants
$C(\supp\delta)$ are uniformly bounded above.  By Lemma~\ref{lem:Phi-dH},
\[
  rH(\delta)-\Phi(\delta)
  \le -\left(1-\frac rd\right)\Phi(\delta).
\]
Therefore the right side of \eqref{eq:alpha-integral-sum} is bounded by a constant times
\[
  \sum_{\delta\in\ZZ_{\ge0}^{d-1}}
  q^{-(1-r/d)\sum_{i=1}^{d-1}i(d-i)\delta_i},
\]
which is a product of convergent geometric series.

For divergence at $r\ge d$, fix $1\le a\le d-1$ and take $\delta=Ne_a$.  By
Lemma~\ref{lem:Phi-dH}, or directly from \eqref{eq:inverse-Cartan-coeff},
\[
  H(Ne_a)=\frac{a(d-a)}dN,
  \qquad
  \Phi(Ne_a)=a(d-a)N=dH(Ne_a).
\]
The support is fixed, so $C(\supp(Ne_a))$ is independent of $N$.  The corresponding
subseries is
\[
  \sum_{N\ge1}q^{(r-d)a(d-a)N/d},
\]
which diverges for $r\ge d$.  This proves the criterion for $\nu$, and the statement for
$\mu$ follows because $\mu$ is a constant multiple of $\nu$.
\end{proof}

\subsection{The sharp cusp-tail estimate}

The same comparison also determines the power-law size of the cusp.

For $T\ge1$, define the alpha-cusp set
\begin{equation}\label{eq:ET-def}
  E_T=\{x\in Y:\alpha(x)>T\}.
\end{equation}
Put
\begin{equation}\label{eq:L-def}
  L=\log_qT.
\end{equation}

\begin{theorem}[Sharp tail exponent]\label{thm:sharp-tail}
There exist constants $0<A_1\le A_2<\infty$, depending only on $d$ and $q$, such that
\begin{equation}\label{eq:sharp-tail}
  A_1T^{-d}\le \nu(E_T)\le A_2T^{-d}
  \qquad (T\ge1).
\end{equation}
The same estimate holds for the probability measure $\mu$ after multiplying the constants
by $\nu(Y)^{-1}$.
\end{theorem}

\begin{proof}
Let $C_*=\max_I C(I)$, where $I$ runs over all subsets of $\{1,\ldots,d-1\}$.  The upper
bound follows by decomposing according to a partial sum that realizes $H$.  For each
$\delta$ with $H(\delta)>L$, choose one index $j$ such that $H(\delta)=S_j(\delta)$.  Then
\[
  \nu(E_T)
  \le
  C_*\sum_{j=1}^{d-1}
  \sum_{\substack{\delta\in\ZZ_{\ge0}^{d-1}\\ S_j(\delta)>L}}
  q^{-\Phi(\delta)}.
\]
By Lemma~\ref{lem:transverse-gap}, write
\[
  \Phi(\delta)=dS_j(\delta)+R_j(\delta),
\]
where
\[
  R_j(\delta)=\sum_{i<j}i(j-i)\delta_i+
  \sum_{i>j}(i-j)(d-i)\delta_i.
\]
The form $R_j$ does not involve $\delta_j$ and is strictly positive in every transverse
coordinate.

Fix the transverse vector $z=(\delta_i)_{i\ne j}$.  Let
\[
  M_j(z)=\sum_{i\ne j}b_{ji}z_i,
  \qquad c_j=j(d-j).
\]
Since $dS_j(\delta)=c_j\delta_j+M_j(z)$, the sum over all $\delta_j\ge0$ satisfying
$S_j(\delta)>L$ is bounded by
\[
  \sum_{\delta_j:\,c_j\delta_j+M_j(z)>dL}
  q^{-c_j\delta_j-M_j(z)}q^{-R_j(z)}
  \le C_jq^{-dL}q^{-R_j(z)}
\]
for a constant $C_j$ depending only on $d$.  Summing over all transverse $z$ gives a finite
geometric product because all nonzero coefficients of $R_j$ are positive.  Hence
$\nu(E_T)\ll q^{-dL}=T^{-d}$.

For the lower bound, use the ray $\delta=Ne_1$.  Choose
\[
  N_T=\left\lfloor\frac{dL}{d-1}\right\rfloor+1.
\]
Then $H(N_Te_1)>L$, so this single vertex lies in $E_T$.  Its measure is
\[
  C(\{1\})q^{-(d-1)N_T}\gg q^{-dL}=T^{-d}.
\]
This proves \eqref{eq:sharp-tail}.
\end{proof}

\begin{remark}[Discrete oscillation]
Because $dH(\delta)\in\ZZ$, the function $T^d\nu(E_T)$ need not converge as
$T\to\infty$.  Theorem~\ref{thm:sharp-tail} proves that it remains bounded above and below
by positive constants.  The rational-cone decomposition further suggests a refinement in
terms of periodic functions of $\log_qT$; obtaining a complete periodic expansion, including
all boundary contributions, is left as a separate problem.
\end{remark}

\section{The height zeta function}\label{sec:hzf}

We define the unnormalized height zeta function, using the positive-moment
convention, by
\begin{equation}\label{eq:zeta-s}
  Z_\alpha(s)=\int_Y\alpha(x)^s\,d\nu(x)
  \qquad (s\in\CC).
\end{equation}
Equivalently, \(Z_\alpha\) is the Mellin transform of the cusp-height distribution
determined by \(\alpha\) and \(\nu\).  Our positive-exponent convention differs from the
inverse-height convention commonly used for global height zeta functions that count
rational points.  It should also be distinguished from the Euler-product edge and chamber
zeta functions previously studied for the same standard quotient in the case \(d=3\)
\cite{HongKwonEdgeZeta,HongKwonChamberZeta}.
The probability-normalized height zeta function is $Z_\alpha(s)/\nu(Y)$.  Set
\begin{equation}\label{eq:u-variable}
  u=q^{s/d}=\exp\bigl((s/d)\log q\bigr),
\end{equation}
where $\log q$ is the positive real logarithm.  Since $dH(\delta)$ is an integer by
\eqref{eq:inverse-Cartan-coeff}, the zeta function is
\begin{equation}\label{eq:zeta-delta}
  Z_\alpha(s)
  =
  \sum_{\delta\in\ZZ_{\ge0}^{d-1}}
  C(\supp\delta)q^{-\Phi(\delta)}u^{dH(\delta)}.
\end{equation}
For $1\le j\le d-1$, let
\begin{equation}\label{eq:b-vector}
  b_j=(b_{j1},\ldots,b_{j,d-1}),
  \qquad
  b_{ji}=\min(i,j)(d-\max(i,j)),
\end{equation}
so that $dS_j(\delta)=\langle b_j,\delta\rangle$.

We first derive a rational-cone formula and its critical pole, and then specialize
the result to some low-dimensional cases.

\subsection{Rational-cone formula}

\begin{theorem}[Rational-cone formula]\label{thm:zeta-rational}
The function $Z_\alpha(s)$ is a rational function of $u=q^{s/d}$.  More explicitly, for a
nonempty subset $I\subset\{1,\ldots,d-1\}$ we regard a vector $\delta\in\ZZ^I$ as a vector
in $\ZZ^{d-1}$ by setting $\delta_i=0$ for $i\notin I$.  For $1\le j\le d-1$, define the
rational partition polyhedron
\begin{align}
  P_{I,j}=\bigl\{\delta\in\RR^I:\;&\delta_i\ge1\quad (i\in I),\nonumber\\
  &\langle b_j-b_\ell,\delta\rangle\ge1\quad (1\le\ell<j),\nonumber\\
  &\langle b_j-b_\ell,\delta\rangle\ge0\quad (j<\ell\le d-1)\bigr\}.\label{eq:PIj-new}
\end{align}
Thus $P_{I,j}$ consists of those integer points with support $I$ for which $j$ is the
smallest index attaining the maximum of the partial sums.  The strict inequalities for
$\ell<j$ are written as $\ge1$ because all quantities
$\langle b_j-b_\ell,\delta\rangle=d(S_j(\delta)-S_\ell(\delta))$ are integers.  Then
\begin{equation}\label{eq:zeta-cone-general}
  Z_\alpha(s)
  =C(\varnothing)+
  \sum_{\varnothing\ne I\subset\{1,\ldots,d-1\}}C(I)
  \sum_{j=1}^{d-1}
  \sum_{\delta\in P_{I,j}\cap\ZZ^I}
  \prod_{i\in I}(q^{-c_i}u^{b_{ji}})^{\delta_i}.
\end{equation}
Each inner lattice-point series is rational.
\end{theorem}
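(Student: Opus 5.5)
The plan is to start from the lattice-point expansion \eqref{eq:zeta-delta} and partition the index set $\ZZ_{\ge0}^{d-1}$ by support and by the smallest maximizing index of the partial sums.

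\textbf{Partition.} The term $\delta=0$ has empty support and $H(0)=0$, so it contributes $C(\varnothing)$. For $\delta\neq0$ with support $I$, every partial sum $S_j(\delta)$ with $1\le j\le d-1$ is strictly positive, because all coefficients $b_{ji}$ are positive and $\delta\neq0$. Hence $H(\delta)=\max_{1\le j\le d-1}S_j(\delta)$, and the endpoints $j=0,d$ never realize the maximum. Let $j(\delta)$ be the smallest index in $\{1,\dots,d-1\}$ with $S_{j}(\delta)=H(\delta)$. Then $\delta\in\ZZ^I$ lies in $P_{I,j}$ exactly when $j(\delta)=j$. To see this, note that $\delta_i\ge1$ for $i\in I$ encodes the support condition. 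Also $d(S_j-S_\ell)=\langle b_j-b_\ell,\delta\rangle$ is an integer, so the strict inequality $S_j>S_\ell$ for $\ell<j$ is equivalent to $\langle b_j-b_\ell,\delta\rangle\ge1$. The nonstrict inequality for $\ell>j$ is written as is. So the sets $P_{I,j}\cap\ZZ^I$, over nonempty $I$ and $1\le j\le d-1$, partition $\ZZ_{\ge0}^{d-1}\setminus\{0\}$. On $P_{I,j}$ we have $dH(\delta)=\langle b_j,\delta\rangle$ and $\Phi(\delta)=\sum_{i\in I}c_i\delta_i$. The summand $C(I)q^{-\Phi}u^{dH}$ is then exactly $C(I)\prod_{i\in I}(q^{-c_i}u^{b_{ji}})^{\delta_i}$, which gives \eqref{eq:zeta-cone-general} as a formal identity. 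It is an honest identity of absolutely convergent series for $\operatorname{Re}(s)<d$, where absolute convergence follows from the argument of Theorem~\ref{thm:integrability} applied to $|u^{dH}|=q^{\operatorname{Re}(s)H}$.

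\textbf{Rationality.} Each $P_{I,j}$ is a rational polyhedron contained in the pointed cone $\RR_{\ge0}^I$ translated by $(1,\dots,1)$. Its recession cone $\sigma_{I,j}$ is therefore pointed and rational. The inner series is the lattice-point generating function of $P_{I,j}$ evaluated at $x_i=q^{-c_i}u^{b_{ji}}$. By the standard theory of rational polyhedra (Brion/Barvinok, or simply triangulating into simplicial cones and summing over a fundamental parallelepiped), this generating function is a rational function of the form $\sum_k x^{p_k}/\prod_{g}(1-x^{g})$, where the $g$ are integral generators of rays of $\sigma_{I,j}$. Substituting $x_i=q^{-c_i}u^{b_{ji}}$ turns each monomial into a constant times an integer power of $u$. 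The result is rational in $u$ provided no denominator factor $1-x^g$ becomes identically $1-1$, i.e.\ provided no ray has both $\langle c,g\rangle=0$ and $\langle b_j,g\rangle=0$. This holds because $g\ge0$ is nonzero and $c_i>0$, so $x^g=q^{-\langle c,g\rangle}u^{\langle b_j,g\rangle}$ is a nonconstant-or-small monomial.

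\textbf{Convergence of the substitution.} We must check that the substitution lies in the convergence domain of the multivariate series, so that the formal rational expression equals the actual sum. For real $s<d$, on $P_{I,j}$ we have $|x^\delta|=q^{-(\Phi-(s/d)\langle b_j,\delta\rangle)}$. By Lemma~\ref{lem:Phi-dH} or Lemma~\ref{lem:transverse-gap}, $\Phi-(s/d)dS_j\ge(1-s/d)\Phi$, which is positive on each ray of $\sigma_{I,j}$. So $|x^g|<1$ for each generator, which gives convergence. The identity then extends meromorphically in $u$.

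\textbf{Main obstacle.} I expect the main obstacle to be this last step: the recession cone is not the full orthant, and the rational-function identity must be justified at the specific specialization rather than only formally. I would handle it by triangulating $\sigma_{I,j}$ and using the strict positivity of $\Phi-(s/d)dS_j$ on $\sigma_{I,j}\setminus\{0\}$ from the transverse-gap lemma.
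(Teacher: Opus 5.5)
Your proposal is correct and follows essentially the same route as the paper. You partition the nonzero lattice points by support $I$ and smallest maximizing index $j$, note that on each piece the summand is the monomial $C(I)\prod_{i\in I}(q^{-c_i}u^{b_{ji}})^{\delta_i}$, and invoke rationality of lattice-point generating functions of rational polyhedra with pointed recession cone.

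You also supply details the paper's proof leaves implicit:
\begin{enumerate}[label=(\roman*)]
\item $S_j(\delta)>0$ for $\delta\neq0$, so the endpoints $j=0,d$ never matter.
\item No specialized denominator factor $1-q^{-\sum_i c_ig_i}u^{\langle b_j,g\rangle}$ vanishes identically.
\item The specialization lies in the region of absolute convergence for $\operatorname{Re}(s)<d$, by the transverse-gap lemma. For $\operatorname{Re}(s)<0$ this is immediate from $S_j\ge0$.
\end{enumerate}
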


\begin{proof}
For a fixed support $I$, the coefficient $C(I)$ is constant.  The integer vector
$\delta$ belongs to exactly one of the sets $P_{I,j}$, namely the one where $j$ is the
smallest maximizer of $S_1(\delta),\ldots,S_{d-1}(\delta)$.  On this region,
\[
  dH(\delta)=dS_j(\delta)=\langle b_j,\delta\rangle.
\]
Thus the summand in \eqref{eq:zeta-delta} is a monomial in $q$ and $u$.  The conditions
\eqref{eq:PIj-new} define a rational partition polyhedron in the positive orthant.  The
lattice-point generating function of a rational partition polyhedron is rational; for
example, one triangulates the recession cone into rational simplicial cones and writes the
corresponding partition parallelepipeds as finite sums of monomials divided by products of
terms of the form $1-M$.  Applying this to every pair $(I,j)$ gives
\eqref{eq:zeta-cone-general}.
This standard rational-cone generating-function argument may also be found in
Beck--Robins~\cite{BeckRobins}.
\end{proof}

\subsection{Convergence and the critical pole coefficient}

\begin{corollary}[Abscissa of convergence and pole]\label{cor:zeta-abscissa}
The series \eqref{eq:zeta-s} converges absolutely if and only if
\[
  \operatorname{Re}(s)<d.
\]
It admits a meromorphic continuation to all $s\in\CC$ as a rational function of $q^{s/d}$.
At $s=d$ it has a simple pole on the real axis.  Equivalently, the coefficient of
$(d-s)^{-1}$ is positive.
\end{corollary}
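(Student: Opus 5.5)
Absolute convergence reduces to the real case: since $|\alpha(x)^s|=\alpha(x)^{\operatorname{Re}(s)}$ and $\alpha\ge1$, the series \eqref{eq:zeta-s} converges absolutely exactly when $\int_Y\alpha^{\operatorname{Re}(s)}\,d\nu<\infty$. For $\operatorname{Re}(s)\le0$ the integrand is at most $1$ and $\nu(Y)<\infty$ by Theorem~\ref{thm:volume}. For $\operatorname{Re}(s)>0$, Theorem~\ref{thm:integrability} with $r=\operatorname{Re}(s)$ gives the criterion $\operatorname{Re}(s)<d$. Meromorphic continuation is then read off from Theorem~\ref{thm:zeta-rational}: on $\operatorname{Re}(s)<d$, $Z_\alpha(s)$ coincides with a finite sum of rational functions of $u=q^{s/d}$, and this rational function is itself the continuation.

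For the pole I would isolate the rank-one rays. Fix $1\le a\le d-1$ and take $I=\{a\}$. On this support $\delta=Ne_a$ with $N\ge1$. By Lemma~\ref{lem:Phi-dH}, $\Phi=dH=c_aN$, so the contribution of these terms is
\[
  C(\{a\})\sum_{N\ge1}\bigl(q^{-c_a}u^{c_a}\bigr)^N=C(\{a\})\frac{(u/q)^{c_a}}{1-(u/q)^{c_a}}.
\]
Since $u/q=q^{(s-d)/d}$, one has $1-(u/q)^{c_a}\sim (c_a\log q/d)(d-s)$ as $s\to d$. Each ray therefore contributes a simple pole at $s=d$ with residue coefficient $d\,C(\{a\})/(c_a\log q)>0$. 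The candidate value is
\[
  \kappa_{d,q}=\frac{d}{\log q}\sum_{a=1}^{d-1}\frac{C(\{a\})}{a(d-a)}.
\]

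The main obstacle is showing that everything else stays bounded as $s\to d^-$; this is what makes the pole simple with exactly this coefficient. My approach is to bound the remaining terms uniformly for real $s\le d$, using $u^{dH}\le q^{dH}$. It suffices to show that
\[
  \sum_{\delta\notin\bigcup_a\ZZ_{>0}e_a,\ \delta\ne0} q^{dH(\delta)-\Phi(\delta)}<\infty.
\]
I would decompose by a maximizing index $j$, as in the proof of Theorem~\ref{thm:sharp-tail}. By Lemma~\ref{lem:transverse-gap}, on that region $\Phi-dH=R_j(\delta)$, which depends only on the transverse vector $z$ and has positive coefficients there.

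Two cases remain. If $z=0$, then $\delta=\delta_je_j$ lies on a ray, which is excluded. If $z\ne0$, then $R_j(z)\ge1$. However, the sum over $\delta_j$ is unconstrained, so summing $q^{-R_j(z)}$ over $\delta_j$ alone diverges. The fix is to use that $j$ is a maximizer: $S_j\ge S_\ell$ for all $\ell$ only gives lower bounds on $\delta_j$, not upper bounds. So the true issue is whether points with $\delta_j\to\infty$ and $z\ne0$ fixed accumulate mass $\asymp1$ each; indeed they do, since $q^{dH-\Phi}=q^{-R_j(z)}$ there. These points correspond to faces of the cone $P_{I,j}$ with $|I|\ge2$, whose recession directions include $e_j$. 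Their generating series in $u$ then contains the factor $(1-(u/q)^{c_j})^{-1}$ multiplied by $\sum_z q^{-R_j(z)}(u/q)^{M_j(z)}$, and this also produces a pole at $s=d$.

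So the correct statement of the coefficient is
\[
  \kappa_{d,q}=\frac{d}{\log q}\sum_{j=1}^{d-1}\frac{1}{c_j}\sum_{z}C(\supp(z)\cup\{j\})\,q^{-R_j(z)},
\]
where the inner sum runs over the transverse vectors $z\in\ZZ_{\ge0}^{\{1,\dots,d-1\}\setminus\{j\}}$ (with $z=0$ giving the pure ray), and boundary effects for large $\delta_j$ are handled as follows. For fixed $z$ and large $\delta_j$, $j$ is automatically the unique maximizer, so the condition cutting out $P_{I,j}$ holds eventually. The finitely many missing small values of $\delta_j$, summed against $q^{-R_j(z)}$, contribute $O(1)$ because of the linear growth bound. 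Simplicity of the pole follows because no other recession direction $w$ of any cone satisfies $\Phi(w)=dH(w)$: by Lemma~\ref{lem:Phi-dH} only the rays do, so all other denominators stay nonzero at $u=q$. Positivity of $\kappa_{d,q}$ is clear from the $z=0$ terms, which are already positive.
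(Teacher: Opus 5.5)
Your argument is correct once the false intermediate steps are dropped, and it is essentially the paper's proof. Those steps are the rays-only candidate $\kappa$ and the reduction to finiteness of the off-ray sum $\sum q^{dH-\Phi}$; you correctly observe that this sum diverges, because each fixed transverse $z$ carries its own geometric series in $\delta_j$. What remains follows the paper: write $\delta=Ne_j+z$, sum the geometric series in $N$ against the summable weights $C(\{j\}\cup\supp z)\,q^{-R_j(z)}$, and absorb the finitely many initial terms $N<N_0(z)$ into $O(1)$ using the linear growth of $N_0(z)$, which gives exactly the coefficient of Corollary~\ref{cor:critical-pole-coefficient}.

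Your additional remarks are valid supplements rather than a different route. You treat $\operatorname{Re}(s)\le 0$ directly via $\alpha\ge1$ and $\nu(Y)<\infty$. You also bound the pole order using the cone decomposition behind Theorem~\ref{thm:zeta-rational}: a denominator $1-q^{-R_j(w)}$ vanishes at $u=q$ only for a generator $w\in\RR_{>0}e_j$, it vanishes there to first order, and at most one such generator occurs in each simplicial piece.
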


\begin{proof}
Absolute convergence for $\operatorname{Re}(s)<d$ is exactly the integrability argument in
Theorem~\ref{thm:integrability}, with $r=\operatorname{Re}(s)$.  If $\operatorname{Re}(s)\ge d$, the
rank-one subseries $\delta=Ne_j$ has ratio
\[
  q^{-j(d-j)}q^{s j(d-j)/d},
\]
whose modulus is at least $1$.  Hence the terms in this subseries do not tend to zero in
absolute value, and the original series cannot converge absolutely beyond the half-plane
$\operatorname{Re}(s)<d$.

Meromorphic continuation follows from Theorem~\ref{thm:zeta-rational}.  To see that the pole at
$s=d$ is simple, fix $j$ and write $\delta=Ne_j+z$, where
$z\in\ZZ_{\ge0}^{\{1,\ldots,d-1\}\setminus\{j\}}$ is transverse to the $j$-th ray.  For
all sufficiently large $N$ the maximum is attained at $S_j$.  The selected chamber condition
is eventually stable because the coefficient of $N$ in $S_j(Ne_j+z)$ is strictly larger
than the coefficient of $N$ in $S_\ell(Ne_j+z)$ for every $\ell\ne j$.  Lemma~\ref{lem:transverse-gap}
gives
\[
  \Phi(Ne_j+z)-dS_j(Ne_j+z)=R_j(z),
\]
where $R_j$ is strictly positive in every transverse coordinate.  Therefore the singular
part is a finite sum over $j$ of one-dimensional geometric series in $N$, with coefficients
summed against the convergent transverse weight $q^{-R_j(z)}$.  Thus the pole at the
positive real point $s=d$ has order one, and its coefficient in the expansion in powers of
$(d-s)^{-1}$ is positive.
\end{proof}

\begin{corollary}[Critical pole coefficient]\label{cor:critical-pole-coefficient}
For $1\le j\le d-1$ and
$z=(z_i)_{i\ne j}\in\ZZ_{\ge0}^{\{1,\ldots,d-1\}\setminus\{j\}}$, define
\begin{equation}\label{eq:Rj-z}
  R_j(z)=\sum_{i<j}i(j-i)z_i+\sum_{i>j}(i-j)(d-i)z_i.
\end{equation}
Then, as $s\to d$ through real values with $s<d$,
\begin{equation}\label{eq:critical-pole-expansion}
  Z_\alpha(s)
  =\frac{1}{d-s}\frac{d}{\log q}
  \sum_{j=1}^{d-1}\frac1{j(d-j)}
  \sum_{z}
  C\bigl(\{j\}\cup\supp z\bigr)q^{-R_j(z)}
  +O(1),
\end{equation}
where the inner sum is over all such transverse vectors $z$.  Equivalently,
\begin{equation}\label{eq:critical-pole-limit}
  -\operatorname{Res}_{s=d}Z_\alpha(s)
  =\lim_{s\to d^-}(d-s)Z_\alpha(s)
  =\frac{d}{\log q}
  \sum_{j=1}^{d-1}\frac1{j(d-j)}
  \sum_z C\bigl(\{j\}\cup\supp z\bigr)q^{-R_j(z)}.
\end{equation}
\end{corollary}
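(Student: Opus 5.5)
We would start from the lattice-point expansion \eqref{eq:zeta-delta} with $s$ real and $s<d$, and separate the terms into a singular part, concentrated near the rank-one rays, and a remainder that stays bounded as $s\to d^-$. Set $\varepsilon=d-s>0$ and write $u^{dH(\delta)}q^{-\Phi(\delta)}=q^{-(\Phi(\delta)-dH(\delta))}q^{-\varepsilon H(\delta)}$.

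By Lemma~\ref{lem:Phi-dH}, $\Phi-dH$ is a nonnegative function on the cone, piecewise linear with rational coefficients. We would use the chambers $P_{I,j}$ of Theorem~\ref{thm:zeta-rational}, indexed by the smallest maximizing index $j$. On each chamber we change coordinates to $\delta=\delta_je_j+z$, with $z$ transverse. There $\Phi-dH=R_j(z)$ by Lemma~\ref{lem:transverse-gap}, and $H=S_j=(c_j\delta_j+M_j(z))/d$.

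The singular part should come from the full one-dimensional fibres over each fixed $z$. For a fixed transverse $z$ and fixed $j$, the points $\delta_je_j+z$ with $\delta_j\ge N_0(z)$ all lie in the $j$-chamber. This is the stability observation from the proof of Corollary~\ref{cor:zeta-abscissa}. The threshold $N_0(z)$ grows at most linearly in $|z|$, and the support is $\{j\}\cup\supp z$ once $\delta_j\ge1$. Summing the geometric series in $\delta_j$ gives
\[
C(\{j\}\cup\supp z)\,q^{-R_j(z)}\,\frac{q^{-\varepsilon(c_jN_0+M_j(z))/d}}{1-q^{-\varepsilon c_j/d}}.
\]
As $\varepsilon\to0$ we have $1/(1-q^{-\varepsilon c_j/d})=\frac{d}{c_j\varepsilon\log q}+O(1)$. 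The numerator is $1+O(\varepsilon(|z|+1))$ uniformly, so the $z$-dependent error is $O(|z|+1)$ times a bounded quantity. That error is summable against $q^{-R_j(z)}$, because every coefficient of $R_j$ is positive, so $\sum_z(|z|+1)q^{-R_j(z)}<\infty$. Summing over $z$ and $j$ then yields exactly the main term of \eqref{eq:critical-pole-expansion}, with $c_j=j(d-j)$.

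It remains to show that everything else is $O(1)$ uniformly in $\varepsilon\in(0,1]$. The leftover terms are of two kinds. The first consists of points in the $j$-chamber with $\delta_j<N_0(z)$, including $\delta_j=0$. The second consists of points attributed to another chamber $j'$ that are not of the form "large multiple of $e_{j'}$ plus the given $z$". These need care to avoid double counting, since each lattice point should be assigned to its unique smallest-maximizer chamber.

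Our bookkeeping is to define the singular piece for each $j$ as the sum over pairs $(z,\delta_j)$ with $\delta_j\ge N_0(z)$. We choose $N_0(z)=K(|z|+1)$ with $K$ large enough that these sets for different $j$ are disjoint and each is contained in $P_{\cdot,j}$. Every remaining lattice point $\delta$ then satisfies $\delta_j\le K(|\delta|_{\text{transverse}}+1)$ for its chamber index $j$. On that set, Lemma~\ref{lem:transverse-gap} gives $\Phi-dH=R_j\gg|z|$, which in turn is $\gg|\delta|-O(1)$. Hence the remainder is bounded by $\sum_\delta q^{-c|\delta|}<\infty$, independently of $\varepsilon$, since $q^{-\varepsilon H}\le1$.

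The main obstacle is this uniform control. Concretely, we must prove the linear comparison: on the set where $\delta_j\le K(|z|+1)$, the gap $R_j(z)$ dominates a fixed multiple of $|\delta|$. This follows from positivity of the transverse coefficients of $R_j$. Combining the singular and remainder parts gives \eqref{eq:critical-pole-expansion}. Multiplying by $d-s$ and letting $s\to d^-$ gives \eqref{eq:critical-pole-limit}. The identification with $-\operatorname{Res}_{s=d}$ follows because $Z_\alpha$ is meromorphic with a simple pole there, by Corollary~\ref{cor:zeta-abscissa}, and $\frac1{d-s}=-\frac1{s-d}$.
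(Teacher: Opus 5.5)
Your proposal is correct and follows essentially the same route as the paper: restrict each smallest-maximizer chamber to the ray tails $\delta_j\ge N_0(z)$ with $N_0(z)$ growing linearly in $z$, extract the pole from the geometric series in $\delta_j$ via Lemma~\ref{lem:transverse-gap}, and absorb the initial segments and the $O(|z|+1)$ corrections using the convergence of $\sum_z(|z|+1)q^{-R_j(z)}$. Your bookkeeping is somewhat more careful than the paper's sketch: you keep the factor $q^{-\varepsilon M_j(z)/d}$, which the paper's displayed tail omits, and you bound the near-wall region uniformly in $\varepsilon$ through the estimate $R_j(z)\gg|\delta|-O(1)$.
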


\begin{proof}
Fix $j$ and a transverse vector $z$.  Put $c_j=j(d-j)$ and write
$\delta=Ne_j+z$.  For all sufficiently large $N$, the partial sum $S_j$ is the smallest
maximizing chamber selected in Theorem~\ref{thm:zeta-rational}.  This eventual stability follows
from the same strict comparison of the coefficients of $N$ in the linear forms
$S_1,\ldots,S_{d-1}$.  By Lemma~\ref{lem:transverse-gap},
\[
  \Phi(Ne_j+z)-dS_j(Ne_j+z)=R_j(z),
\]
and this expression is independent of $N$.  The tail of the corresponding one-dimensional
series is therefore
\[
  C(\{j\}\cup\supp z)q^{-R_j(z)}
  \sum_{N\ge N_0(z)}q^{(s-d)c_jN/d}.
\]
Changing the finite lower limit $N_0(z)$ affects only the holomorphic part at $s=d$.
Indeed, $N_0(z)$ grows at most linearly in $\sum_{i\ne j}z_i$, because the inequalities
which force $S_j$ to be the selected maximal partial sum are linear in $N$ and $z$.  Since
$R_j(z)$ has strictly positive coefficients in every transverse coordinate, both
\[
  \sum_z q^{-R_j(z)}
  \qquad\text{and}\qquad
  \sum_z N_0(z)q^{-R_j(z)}
\]
converge.  Therefore the finite initial segments may be summed over all transverse $z$ and
contribute only an $O(1)$ term near $s=d$.  The singular part of the tail is consequently
\[
  \frac{d}{(d-s)c_j\log q}
  C(\{j\}\cup\supp z)q^{-R_j(z)}.
\]
Summing this absolutely convergent transverse contribution over $j$ and $z$ gives
\eqref{eq:critical-pole-expansion}.
\end{proof}

\begin{example}[Low-dimensional pole coefficients]\label{ex:low-dimensional-pole-coefficients}
The critical pole-coefficient formula agrees with the explicit rational functions in
Section~\ref{sec:low-rank-zeta}.  For $d=3,4,5$ one obtains respectively
\[
  \lim_{s\to3^-}(3-s)Z_{\alpha,3}(s)
  =\frac{6}{q^2(q-1)^3(q+1)\log q},
\]
\[
  \lim_{s\to4^-}(4-s)Z_{\alpha,4}(s)
  =\frac{4(q^2+3q+1)}
  {q^4(q-1)^5(q+1)^2(q^2+q+1)\log q},
\]
and
\[
  \lim_{s\to5^-}(5-s)Z_{\alpha,5}(s)
  =\frac{10(q^4+q^3+3q^2+q+1)}
  {q^6(q-1)^7(q+1)^3(q^2+1)(q^2+q+1)^2\log q}.
\]
\end{example}

\subsection{Explicit formulas for \(d=3,4,5\)}\label{sec:low-rank-zeta}

We conclude the section by collecting the rational functions in the first three
nontrivial cases.

For clarity we now write $Z_{\alpha,d}$ for the height zeta function in the
case of matrix size \(d\).  In each
of the following formulas we put
\[
  u=q^{s/d}
\]
with the displayed value of $d$.  The rational-cone formula of
Theorem~\ref{thm:zeta-rational} gives a finite sum of geometric series; after collecting over a
common denominator, one convenient form is
\[
  Z_{\alpha,d}(s)=\frac{P_d(q,u)}{D_d(q,u)}.
\]

\begin{proposition}[The cases $d=3,4,5$]\label{prop:explicit-low-rank-zeta}
For $d=3$,
\begin{align*}
  D_3(q,u)&=(q-1)^2(q+1)(q^2+q+1)(q-u)(q^4-u^3),\\
  P_3(q,u)&=q^2-qu+2(q^2+q+1)u^2-qu^3+u^4.
\end{align*}
Thus $Z_{\alpha,3}(s)=P_3(q,u)/D_3(q,u)$, with $u=q^{s/3}$.

For $d=4$,
\begin{align*}
  D_4(q,u)={}&(q-1)^3(q+1)^2(q^2+1)(q^2+q+1)\\
  &\cdot(q-u)(q+u)(q^3-u^2)(q^3+u^2)(q^5-u^4),
\end{align*}
with $u=q^{s/4}$, and
\begin{align*}
  P_4(q,u)={}&q^7-q^5u^2+2q^4(q+1)(q^2+1)u^3\\
  &+q^2(q^5+2q^4+4q^3+4q^2+4q+1)u^4\\
  &-2q^2(q+1)(q^2+1)u^5\\
  &+(q^5+4q^4+4q^3+4q^2+2q+1)u^6\\
  &+2(q+1)(q^2+1)u^7-q^2u^8+u^{10}.
\end{align*}
Then $Z_{\alpha,4}(s)=P_4(q,u)/D_4(q,u)$.

For $d=5$,
\begin{align*}
  D_5(q,u)={}&(q-1)^4(q+1)^2(q^2+1)(q^2+q+1)(q^4+q^3+q^2+q+1)\\
  &\cdot(q-u)(q^6-u^5)(q^7-u^5)(q^8-u^5),
\end{align*}
with $u=q^{s/5}$, and
\begin{align*}
  P_5(q,u)={}&q^{12}-q^{11}u+2q^8(q^4+q^3+q^2+q+1)u^4\\
  &-q^5(q^6-q^4-2q^3-2q^2-2q-1)u^5\\
  &+q^4(2q^8+2q^7+3q^6+2q^5+q^4-2q^3-2q^2-2q-1)u^6\\
  &-2q^5(q^2+1)(q^4+q^3+q^2+q+1)u^7\\
  &+2q^2(q^4+q^3+q^2+q+1)(q^4+q^3+3q^2+q+1)u^8\\
  &-2q(q^2+1)(q^4+q^3+q^2+q+1)u^9\\
  &+(-q^8-2q^7-2q^6-2q^5+q^4+2q^3+3q^2+2q+2)u^{10}\\
  &+q(q^6+2q^5+2q^4+2q^3+q^2-1)u^{11}\\
  &+2(q^4+q^3+q^2+q+1)u^{12}-qu^{15}+u^{16}.
\end{align*}
Then $Z_{\alpha,5}(s)=P_5(q,u)/D_5(q,u)$.
\end{proposition}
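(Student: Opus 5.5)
The plan is to evaluate the series \eqref{eq:zeta-delta} directly in each case, using the chamber decomposition of Theorem~\ref{thm:zeta-rational}, and then verify the displayed identities as equalities of rational functions in $q$ and $u$. For fixed $d$, I first list the $2^{d-1}$ supports $I$ together with their constants $C(I)$ from \eqref{eq:C-I-Phi}. I also list the forms $c_i=i(d-i)$ and the vectors $b_j$ from \eqref{eq:b-vector}. For $d=3$ these are $c=(2,2)$, $b_1=(2,1)$ and $b_2=(1,2)$. For $d=4$ they are $c=(3,4,3)$, $b_1=(3,2,1)$, $b_2=(2,4,2)$ and $b_3=(1,2,3)$. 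For $d=5$ one proceeds similarly with $c=(4,6,6,4)$.

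Next I compute the lattice-point generating function of each polyhedron $P_{I,j}$. For $d=3$ this is elementary. With support $\{1,2\}$, the chamber $j=1$ is $\delta_1\ge\delta_2+0$; to be precise, one uses $\langle b_1-b_2,\delta\rangle=\delta_1-\delta_2\ge0$. The chamber $j=2$ is $\delta_2\ge\delta_1+1$. Each chamber is a two-dimensional simplicial cone, unimodular after the substitution $\delta_1=\delta_2+k$, so each sum factors as a product of geometric series in the monomials $q^{-2}u^{2}$, $q^{-2}u$, and so on. The rank-one supports give single geometric series with ratio $q^{-2}u^2$. Adding everything with the weights $C(I)$ and clearing denominators should produce $P_3/D_3$. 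The factor $(q-u)$ comes from the ratio $q^{-2}u^2$ only after recombination; checking that the raw denominators $1-q^{-2}u^2$, $1-q^{-4}u^3$ and so on collapse to $(q-u)(q^4-u^3)$ is part of the verification. Equivalently, some apparent poles must cancel.

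For $d=4$ and $d=5$ the chambers $P_{I,j}$ for full support are no longer unimodular simplicial cones. I would triangulate each recession cone, which is cut out by $\delta\ge0$ and $\langle b_j-b_\ell,\delta\rangle\ge0$, by change of variables. Within chamber $j$, I would use the differences of consecutive slopes, in the style of the $A_{d-1}$ concavity of $S_\ell$. Concretely, the maximizing index of a concave profile is determined by the sign pattern of the slopes $n_\ell$. Since $n_\ell=S_\ell-S_{\ell-1}$, the condition ``$j$ is the smallest maximizer'' becomes $n_j>0\ge n_{j+1}$. This is a single pair of linear inequalities rather than $d-2$ of them, which simplifies the enumeration. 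I then parametrize by the $\delta_i$ with one constraint, $\sum_i (\text{coefficient})\,\delta_i\ge 0$ or $\ge1$, and sum over the fundamental parallelepiped using residues modulo $d$. This is where denominators such as $q^5-u^4$, or $q^6-u^5$, $q^7-u^5$, $q^8-u^5$, arise, since $dn_j$ involves division by $d$.

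Finally I sum over $I$ and $j$ and reduce to the common denominator $D_d$. Two consistency checks are available. First, at $u=1$ (that is, $s=0$) the result must equal $\nu(Y)$ from Theorem~\ref{thm:volume}. Second, the residue at $u=q^{(d)/d}$, i.e.\ $s=d$, must match Example~\ref{ex:low-dimensional-pole-coefficients} and Corollary~\ref{cor:critical-pole-coefficient}.

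The main obstacle is the bookkeeping for $d=5$. The chambers with full support $I=\{1,2,3,4\}$ have non-unimodular cones with multiplicities up to $5$, so the parallelepiped sums contain many monomials, and the final numerator must come out exactly as displayed. I would first derive the slope-sign description of the chambers carefully. I would then carry out the summation symbolically, organized by the residue of $\sum_i i\delta_i \bmod d$, which determines the integrality of $n_j$. The polynomial identity $\sum = P_5/D_5$ itself would be verified by clearing denominators, a finite but lengthy check best done by computer algebra. The two checks above serve as independent confirmation.
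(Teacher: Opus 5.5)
Your plan is correct and is essentially the paper's proof. The paper also splits by the smallest maximizing index through $n_j>0\ge n_{j+1}$, turns each piece into finitely many geometric series after a residue-class split, and finishes with an exact symbolic identity, using the same $u=1$ and $s=d$ checks. The differences are only in the bookkeeping:
\begin{itemize}
\item The paper sums over the ray coordinate $k$ first, so each chamber becomes $k\ge N_{d,j}(w_j(z))$. Residues of $w_j(z)$ are then taken modulo $j$ or $d-j$, which avoids triangulating and working with residues modulo $d$.
\item Factors such as $q^5-u^4$ do not come from a division by $d$. They come from chamber rays such as $\delta=(2,1,0)$ for $d=4$, whose weight is $q^{-10}u^8$.
\item The paper fixes the numerator by matching the first $M_d+1$ Taylor coefficients once $D_d$ and the degree bound are established, rather than simplifying the full rational function.
\end{itemize}
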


\begin{proof}
Appendix~\ref{app:low-rank-computation} gives an exact finite reduction of the
calculation.  In the case \(d=3\), the computation is written out completely; for
\(d=4,5\), the remaining verification is a finite symbolic identity.  The concavity of
the partial sums reduces the condition that $j$ be the first
maximizer to two inequalities on consecutive slopes.  After the $j$-th coordinate is summed
as a geometric series, the remaining transverse sum is separated into finitely many residue
classes.  This proves that the displayed $D_d(q,u)$ is a common denominator and gives the
degree bounds
\[
  \deg_u\bigl(D_3Z_{\alpha,3}\bigr)\le4,\qquad
  \deg_u\bigl(D_4Z_{\alpha,4}\bigr)\le10,\qquad
  \deg_u\bigl(D_5Z_{\alpha,5}\bigr)\le16.
\]
The coefficient identity in Proposition~\ref{prop:finite-coefficient-verification} then
determines the three numerators uniquely and yields exactly the polynomials displayed above.
As a useful check, substituting $u=1$ in each formula gives
\[
  Z_{\alpha,d}(0)=\nu(Y)
  =
  \frac{d}{\left(\prod_{m=1}^{d}(q^m-1)\right)
            \left(\prod_{m=2}^{d-1}(q^m-1)\right)}
\]
for $d=3,4,5$, in agreement with Theorem~\ref{thm:volume}. The pole coefficients at
\(u=q\) also agree with Example~\ref{ex:low-dimensional-pole-coefficients}.
\end{proof}

\begin{remark}[Positive real poles]
The first positive real pole of each displayed function occurs at $u=q$, i.e. at $s=d$.
This is the pole predicted by Corollaries~\ref{cor:zeta-abscissa} and~\ref{cor:critical-pole-coefficient}.  The remaining
positive real poles lie farther out in the $u$-plane and reflect secondary chamber
directions.
\end{remark}

\section{Further directions}

The preceding unified volume, cusp, and zeta calculations suggest several
follow-up problems.

\begin{enumerate}[label=(\arabic*)]
\item Determine a closed general denominator and numerator pattern for $Z_{\alpha,d}$.
The low-dimensional cases show a surprisingly compact factorization after collecting terms, but the
factor pattern is not yet transparent from the cone formula alone.
\item Refine the tail estimate from two-sided bounds to an exact finite sum of periodic
leading terms in $\log_qT$.  The proof of Theorem~\ref{thm:sharp-tail} already reduces the problem
to finitely many rational cones; the remaining task is to write the leading periodic function
explicitly.
\item Compare the pole coefficient in Corollary~\ref{cor:critical-pole-coefficient} with Eisenstein-series residues
or height-zeta normalizations for the same quotient.  This should clarify how the elementary
building height used here matches automorphic normalizations.
\item Extend the method to $\SL_d$, $\GL_d$, and other split groups.  In that setting the
coordinates $\delta_i$ should be replaced by simple-root coordinates, $\Phi$ by the modular
character $2\rho$, and $H$ by a maximum of normalized fundamental-weight heights.
\end{enumerate}

\begin{appendix}

\section{The two finite q-identities in the cut-set summation}\label{app:q-identities}

This appendix gives the algebra behind Lemma~\ref{lem:closed-F}.  We use the standard notation
\[
  (a;q)_n=\prod_{r=0}^{n-1}(1-aq^r),\qquad (a;q)_0=1.
\]
Recall that $Q_n=(-1)^n(q;q)_n$.
For the standard \(q\)-binomial and basic hypergeometric identities used below, see
Andrews--Askey--Roy~\cite{AndrewsAskeyRoy} and Gasper--Rahman
\cite{GasperRahman}.

\begin{lemma}[A finite $q$-binomial convolution]\label{lem:appendix-q-convolution}
For every integer $n\ge0$,
\begin{equation}\label{eq:appendix-q-convolution}
  \sum_{k=0}^n
  (-1)^kq^{\binom k2}
  \frac{(a;q)_{n-k}}{(q;q)_k(q;q)_{n-k}}
  =\frac{(-1)^nq^{\binom n2}a^n}{(q;q)_n}.
\end{equation}
\end{lemma}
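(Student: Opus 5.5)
The plan is a generating-function argument: both sides of \eqref{eq:appendix-q-convolution} are coefficients of a product of two known $q$-series. By the $q$-binomial theorem,
\[
  \sum_{m\ge0}\frac{(a;q)_m}{(q;q)_m}x^m=\frac{(ax;q)_\infty}{(x;q)_\infty},
\]
and by Euler's identity,
\[
  \sum_{k\ge0}\frac{(-1)^kq^{\binom k2}}{(q;q)_k}x^k=(x;q)_\infty .
\]
Both hold as formal power series in $x$ (or for $|x|<1$, $|q|<1$). Once this is verified for $|q|<1$, it extends to all $q$ (in particular to a prime power) by polynomial identity, as explained below.

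Multiplying the two series, the left-hand side of \eqref{eq:appendix-q-convolution} is exactly the coefficient of $x^n$ in the Cauchy product. The factor $(x;q)_\infty$ cancels, so this coefficient equals the coefficient of $x^n$ in $(ax;q)_\infty$. Applying Euler's identity again with $x$ replaced by $ax$ gives
\[
  (ax;q)_\infty=\sum_{n\ge0}\frac{(-1)^nq^{\binom n2}a^n}{(q;q)_n}x^n .
\]
Its $x^n$ coefficient is the right-hand side of \eqref{eq:appendix-q-convolution}. This proves the identity for $|q|<1$ and all $a$.

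The one point needing care is the extension to arbitrary $q$, since the infinite products are only defined for $|q|<1$. After multiplying through by $(q;q)_n$, both sides become polynomials in $a$ and $q$: the left side has the form $\sum_k(-1)^kq^{\binom k2}\binom nk_q(a;q)_{n-k}$, with Gaussian binomials, which are polynomials. Two polynomials in $(a,q)$ that agree on the open set $|q|<1$ are identical.

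To avoid analysis altogether, one can instead argue by induction on $n$, using the $q$-Pascal rule $\binom{n}{k}_q=\binom{n-1}{k-1}_q+q^k\binom{n-1}{k}_q$ together with $(a;q)_{n-k}=(a;q)_{n-1-k}(1-aq^{n-1-k})$. This is routine but messier, so I would present the generating-function proof and mention the polynomial-identity extension.
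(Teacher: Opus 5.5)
Your proposal is correct and follows the paper's proof essentially verbatim: it multiplies Euler's identity by the $q$-binomial theorem, cancels $(x;q)_\infty$, reads off the $x^n$ coefficient of $(ax;q)_\infty$ for $|q|<1$, and then extends to arbitrary $q$. The only cosmetic difference is in that last step, where you clear denominators by $(q;q)_n$ to get a polynomial identity in $(a,q)$. The paper instead argues with rational functions whose poles lie in the zero set of $(q;q)_0\cdots(q;q)_n$.
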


\begin{proof}
We use the two standard $q$-binomial generating functions
\[
  \sum_{k\ge0}\frac{(-1)^kq^{\binom k2}}{(q;q)_k}x^k=(x;q)_\infty,
  \qquad
  \sum_{m\ge0}\frac{(a;q)_m}{(q;q)_m}x^m=\frac{(ax;q)_\infty}{(x;q)_\infty}.
\]
To avoid any convergence ambiguity, first regard $q$ as a complex parameter with
$|q|<1$ and $|x|$ sufficiently small.  Multiplication gives $(ax;q)_\infty$.  Comparing the
coefficient of $x^n$ yields \eqref{eq:appendix-q-convolution} in the domain $|q|<1$.  Both
sides of \eqref{eq:appendix-q-convolution} are rational functions of $a$ and $q$ whose
possible poles are contained in the finite set $(q;q)_0\cdots(q;q)_n=0$.  Hence the identity
extends to every specialization for which these denominators are nonzero.  In particular it
applies to the prime power $q>1$ used in the body of the paper.
\end{proof}

\begin{lemma}[The induction convolution]\label{lem:appendix-first-q-identity}
For $1\le n\le d-1$,
\begin{equation}\label{eq:appendix-first-q-identity}
  \sum_{r=0}^{n-1}
  \frac{q^{\binom r2}Q_{d-r-1}}{Q_rQ_{n-r}}
  =
  \frac{q^{\binom n2}Q_{d-n-1}}{Q_n}\bigl(q^{n(d-n)}-1\bigr).
\end{equation}
\end{lemma}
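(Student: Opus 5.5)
The plan is to reduce \eqref{eq:appendix-first-q-identity} to Lemma~\ref{lem:appendix-q-convolution} with the specialization $a=q^{d-n}$, after dividing both sides by $Q_{d-n-1}$ and rewriting every $Q$ in terms of $(q;q)$-symbols via $Q_m=(-1)^m(q;q)_m$.

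\emph{Step 1: the ratio $Q_{d-r-1}/Q_{d-n-1}$.} For $0\le r\le n-1$ (note $d-n-1\ge0$ since $n\le d-1$), we have
\[
  \frac{Q_{d-r-1}}{Q_{d-n-1}}=\prod_{j=d-n}^{d-r-1}(q^j-1)=(-1)^{n-r}\prod_{i=0}^{n-r-1}\bigl(1-q^{d-n}q^{i}\bigr)=(-1)^{n-r}(a;q)_{n-r},
  \qquad a=q^{d-n}.
\]

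\emph{Step 2: rewrite the left side.} Substitute this ratio together with $Q_r=(-1)^r(q;q)_r$ and $Q_{n-r}=(-1)^{n-r}(q;q)_{n-r}$. The two factors $(-1)^{n-r}$ cancel, and we get
\[
  \frac{1}{Q_{d-n-1}}\sum_{r=0}^{n-1}\frac{q^{\binom r2}Q_{d-r-1}}{Q_rQ_{n-r}}
  =\sum_{r=0}^{n-1}(-1)^rq^{\binom r2}\frac{(a;q)_{n-r}}{(q;q)_r(q;q)_{n-r}}.
\]
This is the sum in \eqref{eq:appendix-q-convolution} with its final term $k=n$ removed. That term equals $(-1)^nq^{\binom n2}/(q;q)_n$, since $(a;q)_0=(q;q)_0=1$.

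\emph{Step 3: apply the convolution.} Lemma~\ref{lem:appendix-q-convolution} gives the full sum as $(-1)^nq^{\binom n2}a^n/(q;q)_n$. Subtracting the $k=n$ term, the truncated sum equals
\[
  \frac{(-1)^nq^{\binom n2}(a^n-1)}{(q;q)_n}=\frac{q^{\binom n2}}{Q_n}\bigl(q^{n(d-n)}-1\bigr),
\]
using $a^n=q^{n(d-n)}$ and $(-1)^n/(q;q)_n=1/Q_n$. Multiplying back by $Q_{d-n-1}$ yields exactly the right side of \eqref{eq:appendix-first-q-identity}.

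The only delicate point is the sign bookkeeping in Step~1. One must check that the index range $j=d-n,\dots,d-r-1$ has exactly $n-r$ factors, and that its lowest exponent $d-n$ is the right choice of $a$. Once that is fixed, the rest is a direct substitution into Lemma~\ref{lem:appendix-q-convolution}.
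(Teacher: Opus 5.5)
Your proof is correct and follows the paper's argument exactly: specialize Lemma~\ref{lem:appendix-q-convolution} at $a=q^{d-n}$, rewrite using $(q^{d-n};q)_{n-k}=(-1)^{n-k}Q_{d-k-1}/Q_{d-n-1}$ and $(q;q)_m=(-1)^mQ_m$, and move the $k=n$ term to the right-hand side. Your explicit check that the product has $n-r$ factors, together with the sign cancellation, supplies bookkeeping that the paper leaves implicit.
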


\begin{proof}
Apply Lemma~\ref{lem:appendix-q-convolution} with $a=q^{d-n}$.  Separate the $k=n$ term and
move it to the right-hand side.  Since
\[
  (q^{d-n};q)_{n-k}=(-1)^{n-k}\frac{Q_{d-k-1}}{Q_{d-n-1}},\qquad
  (q;q)_m=(-1)^mQ_m,
\]
the remaining sum over $0\le k\le n-1$ becomes
\[
  \frac1{Q_{d-n-1}}
  \sum_{k=0}^{n-1}\frac{q^{\binom k2}Q_{d-k-1}}{Q_kQ_{n-k}}.
\]
The right-hand side becomes
$q^{\binom n2}(q^{n(d-n)}-1)/Q_n$.  Multiplying by $Q_{d-n-1}$ proves the identity.
\end{proof}

\begin{lemma}[The terminal identity]\label{lem:appendix-terminal-q-identity}
For $d\ge1$,
\begin{equation}\label{eq:appendix-terminal-q-identity}
  \sum_{n=0}^{d-1}\frac{q^{\binom n2}}{Q_n(q^{d-n}-1)}
  =\frac{dq^{\binom d2}}{Q_d}.
\end{equation}
\end{lemma}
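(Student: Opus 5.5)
Our approach is to prove \eqref{eq:appendix-terminal-q-identity} by induction on $d$, treating both sides as rational functions of $q$. Write $T_d$ for the left side. For $d=1$ the sum has the single term $1/(q-1)=1/Q_1$, which matches the right side.

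For the induction step it is convenient to use partial fractions in the variable $x=q^{d}$. Consider
\[
  f(x)=\sum_{n=0}^{d-1}\frac{q^{\binom n2}}{Q_n}\cdot\frac{1}{xq^{-n}-1}.
\]
Then $T_d=f(q^d)$. We will not use this directly; a generating-function argument is cleaner.

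The main plan is to use a generating function in $d$. Put $G(z)=\sum_{d\ge1}T_dz^d$ and expand
\[
  \frac1{q^{d-n}-1}=\sum_{k\ge1}q^{-k(d-n)}.
\]
Exchanging the order of summation, with $|z|$ small and the summands viewed formally, gives
\[
  G(z)=\sum_{k\ge1}\sum_{n\ge0}\frac{q^{\binom n2}z^n}{Q_n}\sum_{m\ge1}(zq^{-k})^m .
\]
The inner $n$-sum is a $q$-exponential. Indeed, since $Q_n=q^{\binom{n+1}2}(q^{-1};q^{-1})_n$, we get
\[
  \frac{q^{\binom n2}}{Q_n}=\frac{q^{-n}}{(p;p)_n}\quad\text{with } p=q^{-1},
\]
so that
\[
  \sum_n\frac{q^{\binom n2}z^n}{Q_n}=\frac1{(pz;p)_\infty}.
\]
The right side is handled the same way:
\[
  \frac{dq^{\binom d2}}{Q_d}=\frac{d\,p^{d}}{(p;p)_d}.
\]
Its generating function is therefore
\[
  z\frac{\partial}{\partial z}\,\frac1{(pz;p)_\infty}
  =\frac1{(pz;p)_\infty}\sum_{k\ge1}\frac{p^kz}{1-p^kz},
\]
by logarithmic differentiation. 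On the left side we have
\[
  \frac1{(pz;p)_\infty}\sum_{k\ge1}\frac{p^kz}{1-p^kz},
\]
which is exactly the same expression. Comparing coefficients of $z^d$ then proves the identity.

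Finally we justify extending the identity back to the prime power $q>1$. This is straightforward here, because with $p=q^{-1}$ we have $|p|<1$, so all series converge absolutely for small $|z|$. The main point to check carefully is the conversion $q^{\binom n2}/Q_n=p^n/(p;p)_n$, together with the corresponding one for $Q_d$; this is routine bookkeeping of powers of $q$. If that conversion were off by a sign or shift, the fallback is a direct induction using Lemma~\ref{lem:appendix-q-convolution} with $a=q^{d}$ and $x$-derivative. We expect the generating-function route to close without trouble.
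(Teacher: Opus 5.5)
Your proof is correct, but it takes a different route from the paper. The opening induction and the partial-fraction function are never used and should be deleted.

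\textbf{Your argument.} You write the left side as a Cauchy product in $(n,\,d-n)$:
\[
  \sum_{d\ge1}T_dz^d=E(z)L(z),
\]
where
\[
  E(z)=\sum_{n\ge0}\frac{p^nz^n}{(p;p)_n}=\frac1{(pz;p)_\infty},
  \qquad
  L(z)=\sum_{m\ge1}\frac{z^m}{q^m-1}=\sum_{k\ge1}\frac{p^kz}{1-p^kz},
  \qquad p=q^{-1}.
\]
The conversion $q^{\binom n2}/Q_n=p^n/(p;p)_n$ is right. The identity then reduces to logarithmic differentiation, $zE'(z)=E(z)\cdot z(\log E)'(z)$. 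Since $0<p<1$, every series converges absolutely for $|z|<1$. So nothing needs to be ``extended back'': you work at the given prime power $q$ throughout.

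\textbf{The paper's argument.} It differentiates the finite convolution of Lemma~\ref{lem:appendix-q-convolution} (with $n=d$) in $a$ at $a=1$. In context this costs one line, because that lemma is already needed for Lemma~\ref{lem:appendix-first-q-identity}. It does, however, rely on proving the convolution for $|q|<1$ and continuing it rationally to $q>1$.

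\textbf{How the two relate.} They share the same mechanism, since $\left.\partial_a(ax;q)_\infty\right|_{a=1}=x\,\partial_x(x;q)_\infty$. The paper is thus also taking a logarithmic derivative of a $q$-exponential, only in base $q$ with $|q|<1$ rather than base $q^{-1}$. Under $q\mapsto p^{-1}$, the coefficients $(-1)^kq^{\binom k2}/(q;q)_k$ of $(x;q)_\infty$ become your $p^k/(p;p)_k$.

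\textbf{What each buys.} Your version is self-contained and avoids the continuation step. The paper's version derives both finite identities of the cut-set recursion from a single convolution.
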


\begin{proof}
Differentiate \eqref{eq:appendix-q-convolution} with $n=d$ with respect to $a$, and then
set $a=1$.  For $m\ge1$,
\[
  \left.\frac{d}{da}(a;q)_m
\right|_{a=1}
  =-\prod_{r=1}^{m-1}(1-q^r)=-(-1)^{m-1}Q_{m-1}.
\]
Thus the derivative of the left-hand side of \eqref{eq:appendix-q-convolution} at $a=1$ is
\[
  \sum_{r=0}^{d-1}\frac{q^{\binom r2}Q_{d-r-1}}{Q_rQ_{d-r}}
  =\sum_{r=0}^{d-1}\frac{q^{\binom r2}}{Q_r(q^{d-r}-1)}.
\]
The derivative of the right-hand side is
$dq^{\binom d2}/Q_d$.  This proves \eqref{eq:appendix-terminal-q-identity}.
\end{proof}

\begingroup

\section{Low-dimensional height zeta computation}\label{app:low-rank-computation}
\par\medskip

We reduce the verification of the low-dimensional formulas to finitely many exact
algebraic identities.  The case \(d=3\) can be checked directly from the displayed
decomposition.  For \(d=4,5\), the residue-class decomposition below leads to a finite
symbolic computation, which may be carried out by hand in principle or, more conveniently,
with exact computer algebra over \(\mathbb Q(q)\).  No numerical approximation is involved.
The first step is a useful reduction of the smallest-maximizer condition to two inequalities.
Besides shortening the low-dimensional calculation, it makes the source of the possible
denominator factors transparent.

\subsection{The first-maximizer band}

Put
\[
  n_j=S_j-S_{j-1}\qquad(1\le j\le d).
\]
The sequence \(S_0,S_1,\ldots,S_d\) is concave, and therefore \(j\) is the smallest index
at which its maximum is attained if and only if
\begin{equation}\label{eq:first-max-slope}
  n_j>0,\qquad n_{j+1}\le0.
\end{equation}
Fix \(1\le j\le d-1\), write \(\delta=ke_j+z\), where \(z_j=0\), and put
\begin{equation}\label{eq:ABw}
  A_j(z)=\sum_{i<j}iz_i,\qquad
  B_j(z)=\sum_{i>j}(d-i)z_i,\qquad
  w_j(z)=A_j(z)-B_j(z).
\end{equation}
Since
\[
  dn_j=(d-j)k+B_j(z)-A_j(z),\qquad
  dn_{j+1}=B_j(z)-A_j(z)-jk,
\]
condition \eqref{eq:first-max-slope} is equivalent to
\begin{equation}\label{eq:first-max-band}
  -jk\le w_j(z)<(d-j)k.
\end{equation}
In particular \(k>0\).  Define
\begin{equation}\label{eq:Njw}
  N_{d,j}(w)=
  \begin{cases}
    \left\lfloor w/(d-j)\right\rfloor+1,&w\ge0,\\[2mm]
    \left\lceil -w/j\right\rceil,&w<0.
  \end{cases}
\end{equation}
Then \(N_{d,j}(w)\) is exactly the least positive integer \(k\) satisfying
\eqref{eq:first-max-band}.

\begin{lemma}[Ray-first summation]\label{lem:ray-first-summation}
Set
\[
  U=q^{-1}u,\qquad X_j=U^{j(d-j)}.
\]
For every \(d\ge2\),
\begin{equation}\label{eq:ray-first-zeta}
  Z_{\alpha,d}(s)
  =
  C(\varnothing)+
  \sum_{j=1}^{d-1}\;
  \sum_{\substack{z\in\ZZ_{\ge0}^{d-1}\\z_j=0}}
  C\bigl(\{j\}\cup\supp z\bigr)q^{-R_j(z)}
  U^{\langle b_j,z\rangle}
  \frac{X_j^{N_{d,j}(w_j(z))}}{1-X_j},
\end{equation}
where \(R_j\) is given by \eqref{eq:Rj-z}.
\end{lemma}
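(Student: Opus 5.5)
We start from the sector sum \eqref{eq:zeta-delta},
\[
  Z_\alpha(s)=\sum_{\delta\in\ZZ_{\ge0}^{d-1}}C(\supp\delta)\,q^{-\Phi(\delta)}u^{dH(\delta)}.
\]
The term $\delta=0$ contributes $C(\varnothing)$. Every nonzero $\delta$ has $H(\delta)>0$; otherwise all $S_j\le 0$, and concavity with $S_0=S_d=0$ forces all $S_j=0$, hence $\delta=0$. So for $\delta\neq0$ there is a well-defined smallest index $j\in\{1,\ldots,d-1\}$ with $S_j(\delta)=H(\delta)$. This partitions the nonzero $\delta$ into $d-1$ classes, exactly as in Theorem~\ref{thm:zeta-rational}. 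By the concavity of $S_0,\dots,S_d$, the condition ``$j$ is the smallest maximizer'' is equivalent to \eqref{eq:first-max-slope}. Writing $\delta=ke_j+z$ with $z_j=0$, this becomes the band condition \eqref{eq:first-max-band}.

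To verify the slope formulas, we compute $dn_j=dS_j-dS_{j-1}=\langle b_j-b_{j-1},\delta\rangle$ from \eqref{eq:inverse-Cartan-coeff}. For $i<j$ the coefficient is $i(d-j)-i(d-j+1)=-i$; for $i=j$ it is $d-j$; for $i>j$ it is $j(d-i)-(j-1)(d-i)=d-i$. Similarly, $dn_{j+1}$ has coefficients $-i$ for $i\le j$ (so $-j$ at $i=j$) and $d-i$ for $i>j$. This gives the displayed formulas for $dn_j$ and $dn_{j+1}$ in terms of $A_j$ and $B_j$. The band then forces $k\ge1$: if $k=0$ it reads $0\le w<0$, which is impossible. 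The set of admissible $k$ is $\{k\ge N_{d,j}(w_j(z))\}$, since both inequalities are monotone in $k$. For $w\ge0$ the binding constraint is $(d-j)k>w$, i.e.\ $k\ge\lfloor w/(d-j)\rfloor+1$. For $w<0$ it is $jk\ge -w$, i.e.\ $k\ge\lceil -w/j\rceil$, which is already $\ge1$.

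Next we rewrite the summand on this class. Since $k\ge1$, we have $\supp\delta=\{j\}\cup\supp z$, so $C(\supp\delta)=C(\{j\}\cup\supp z)$ is independent of $k$. By Lemma~\ref{lem:transverse-gap}, $\Phi(\delta)=dS_j(\delta)+R_j(z)$, because $R_j$ does not involve $\delta_j$. Also $dH(\delta)=dS_j(\delta)=\langle b_j,z\rangle+c_jk$ with $c_j=j(d-j)$. The summand is therefore
\[
  C(\{j\}\cup\supp z)\,q^{-R_j(z)}(q^{-1}u)^{\langle b_j,z\rangle}(q^{-1}u)^{c_jk}
  = C(\{j\}\cup\supp z)\,q^{-R_j(z)}U^{\langle b_j,z\rangle}X_j^{k}.
\]
Summing the geometric series over $k\ge N_{d,j}(w_j(z))$ gives the factor $X_j^{N_{d,j}(w_j(z))}/(1-X_j)$. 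Summing over $j$ and over $z$ then yields \eqref{eq:ray-first-zeta}.

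The main point needing care is justifying these rearrangements, and stating the range where \eqref{eq:ray-first-zeta} is an identity of convergent series. For real $s<d$ (or $\operatorname{Re}s<d$) we have $|U|<1$, hence $|X_j|<1$. By Theorem~\ref{thm:integrability} / Corollary~\ref{cor:zeta-abscissa} the original series converges absolutely. All terms are then grouped from an absolutely convergent series, so reordering and partitioning by $(j,z,k)$ is legitimate. Outside this range the formula is read as the meromorphic continuation, term by term, in the rational variable $u$.
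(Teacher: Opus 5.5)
Your proof is correct and follows the paper's argument. You partition the nonzero $\delta$ by their smallest maximizing index $j$, use the band condition \eqref{eq:first-max-band} to identify the admissible $k$ as $k\ge N_{d,j}(w_j(z))$, apply Lemma~\ref{lem:transverse-gap}, and sum the geometric series in $k$; your slope computations and the absolute-convergence justification for $\operatorname{Re}(s)<d$ fill in details the paper leaves implicit.

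The one thing to drop is the closing remark about continuing the formula ``term by term'' beyond $\operatorname{Re}(s)<d$. The $z$-sum on the right diverges for large $|u|$: for $d=3$, $j=1$ the terms behave like $(q^{-1}U^3)^{z_2}$. So the identity should be asserted only in the half-plane of convergence, with the continuation supplied by Theorem~\ref{thm:zeta-rational}.
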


\begin{proof}
Every nonzero \(\delta\) has a unique smallest maximizing index \(j\).  By
\eqref{eq:first-max-band}, after the transverse vector \(z\) has been fixed, the corresponding
points are precisely
\[
  \delta=ke_j+z,\qquad k\ge N_{d,j}(w_j(z)).
\]
On these points, \(H=S_j\), while Lemma~\ref{lem:transverse-gap} gives
\[
  \Phi(ke_j+z)-dS_j(ke_j+z)=R_j(z).
\]
Moreover,
\[
  dS_j(ke_j+z)=j(d-j)k+\langle b_j,z\rangle.
\]
The support is \(\{j\}\cup\supp z\), independently of \(k\), and hence the sum over \(k\)
is the geometric series displayed in \eqref{eq:ray-first-zeta}.  The zero vector contributes
\(C(\varnothing)\).
\end{proof}

\subsection{Finite residue-class reduction}

For \(J\subset\{1,\ldots,d-1\}\setminus\{j\}\), define the following product
\begin{align}
 \Theta_{d,j,J}(y;U)
 ={}&
 \prod_{\substack{i\in J\\i<j}}
 \frac{q^{-i(j-i)}U^{i(d-j)}y^i}
 {1-q^{-i(j-i)}U^{i(d-j)}y^i}
 \prod_{\substack{i\in J\\i>j}}
 \frac{q^{-(i-j)(d-i)}U^{j(d-i)}y^{-(d-i)}}
 {1-q^{-(i-j)(d-i)}U^{j(d-i)}y^{-(d-i)}}.
\label{eq:Theta}
\end{align}
For \(J=\varnothing\), the product is \(1\).  Its coefficient of \(y^w\) is exactly
\[
  \sum_{\substack{\supp z=J\\w_j(z)=w}}
  q^{-R_j(z)}U^{\langle b_j,z\rangle}.
\]
For \(\operatorname{Re}(s)<d\), one has \(|U|<1\), and the indicated geometric
expansions converge absolutely on a nonempty annulus containing \(|y|=1\).  We use
\([y^w]\Theta_{d,j,J}\) for the coefficient in this bilateral Laurent expansion;
equivalently, it is defined by the absolutely convergent weighted sum displayed above.
Consequently \eqref{eq:ray-first-zeta} becomes
\begin{equation}\label{eq:theta-zeta}
 Z_{\alpha,d}(s)
 =
 C(\varnothing)+
 \sum_{j=1}^{d-1}\frac1{1-X_j}
 \sum_{J\subset\{1,\ldots,d-1\}\setminus\{j\}}C(\{j\}\cup J)
 \sum_{w\in\ZZ}[y^w]\Theta_{d,j,J}(y;U)\,
 X_j^{N_{d,j}(w)}.
\end{equation}
This identity already gives a finite exact reduction: every factor in \(\Theta\) is a
geometric series, and \(N_{d,j}(w)\) is explicitly linear after \(w\) is separated by sign
and residue class modulo \(j\) or \(d-j\).

\begin{lemma}[Low-dimensional denominator reduction]\label{lem:low-dimensional-denominator-reduction}
For \(d=3,4,5\), expand the products in \eqref{eq:Theta}, separate \(w\ge0\) and \(w<0\),
and in the two sums write respectively
\[
  w=(d-j)m+r,\quad0\le r<d-j,
  \qquad\text{and}\qquad
  -w=jm+r,\quad1\le r\le j.
\]
The resulting sums are ordinary geometric series.  After pairing \(j\) with \(d-j\) by
the Dynkin reversal \(i\mapsto d-i\), their common denominators and numerator degree bounds
are as follows:
\begin{center}
\renewcommand{\arraystretch}{1.2}
\begin{tabular}{c|c|c}
\(d\)&common denominator&degree bound\\
\hline
\(3\)&\(D_3(q,u)\)&\(4\)\\
\(4\)&\(D_4(q,u)\)&\(10\)\\
\(5\)&\(D_5(q,u)\)&\(16\)
\end{tabular}
\end{center}
Here \(D_3,D_4,D_5\) are exactly the polynomials displayed in
Proposition~\ref{prop:explicit-low-rank-zeta}.  In particular,
\[
  D_d(q,u)Z_{\alpha,d}(s)\in\mathbb Q(q)[u]
\]
with the indicated degree bound.
\end{lemma}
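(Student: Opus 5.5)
Starting from the ray-first reduction \eqref{eq:theta-zeta}, I would treat each pair $(j,J)$ separately and show that the inner sum $\sum_{w}[y^w]\Theta_{d,j,J}(y;U)\,X_j^{N_{d,j}(w)}$ is a rational function of $U$ (equivalently of $u$ over $\mathbb Q(q)$), with explicitly identifiable denominator factors. The product \eqref{eq:Theta} splits into factors with $i<j$, which carry nonnegative powers $y^{i}$, and factors with $i>j$, which carry negative powers $y^{-(d-i)}$. I would expand every factor as a geometric series in its own summation variable $z_i\ge1$. Then $w=w_j(z)=\sum_{i<j}iz_i-\sum_{i>j}(d-i)z_i$, and the weight is a monomial $\prod_i \rho_i^{z_i}$, where $\rho_i=q^{-i(j-i)}U^{i(d-j)}$ or $q^{-(i-j)(d-i)}U^{j(d-i)}$.

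Next, split by the sign of $w$ and by residue class as in the statement. For $w\ge0$ with $w=(d-j)m+r$, we have $X_j^{N_{d,j}(w)}=X_j^{m+1}$. For $w<0$ with $-w=jm+r$ and $1\le r\le j$, we get $X_j^{m+1}$ as well. On each residue class, therefore, $X_j^{N(w)}$ is a monomial in $U$ times $(X_j^{1/(d-j)})^{w-r}$ or $(X_j^{1/j})^{-w-r}$. The exponent is linear in $z$, so the summand is again a product of geometric terms, restricted to a congruence class of a linear form and to a half-space. I would handle the congruence restriction by roots-of-unity filtering, or more concretely by summing over $z$ modulo $(d-j)$ or $j$, which here is at most $4$. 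I would handle the half-space restriction by noting that for $d\le5$ each side of the split involves at most three positive variables and at most three negative variables. I would then sum the constrained variables over a simplicial decomposition, which is again a finite sum of geometric series. Since all of this is finite, the denominators that can arise are products of factors $1-M$ with $M$ running over explicit monomials: the $\rho_i$, the modified ratios $\rho_i X_j^{\pm i/(d-j)}$, and $1-X_j$.

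The third step is to identify these monomials with the factors of $D_d$. The key observation is that after substitution the modified ratios become powers of $q^{-a}U^{b}$ matching $(q-u)$, $(q^{c}-u^{e})$, and so on. For example, for $d=3$ and $j=1$, $i=2$, we get $\rho_2=q^{-1}U$ and $X_1=U^2$, producing $(q-u)$ and $(q^4-u^3)$. I would use the Dynkin symmetry $i\mapsto d-i$, which maps the $(j,J)$ term to the $(d-j,d-J)$ term with the same $C$ and the same weights, to halve the casework. The spurious denominators arising from different $j$ should cancel, or coalesce into the stated ones, when the paired terms are combined. The constant factors of $D_d$ come from the $C(I)$, since $\prod Q_\ell^{-1}$ over all block patterns divides a common $Q_dQ_{d-1}$-type expression. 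The degree bound then comes from tracking the maximal $u$-degree of the numerators: each geometric series $M/(1-M)$ contributes a numerator degree at most equal to the degree of its denominator, and each residue sum contributes boundedly many extra monomials.

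The main obstacle is the denominator matching. The naive union of all factors $1-M$ is much larger than $D_d$; for instance, factors like $1-U^{k}q^{-a}$ with mismatched ratios appear individually. Showing that these cancel requires explicitly combining the paired terms. For $d=3$ I would do this by hand. For $d=4,5$ I would first try to prove that each individual $(j,J)$-term already has a denominator dividing $D_d$ up to cyclotomic-type factors $1\pm U^{k}$ that come from residue splitting (for example $(q+u)$, $(q^3+u^2)$ in $D_4$). Failing that, I would reduce to the exact symbolic check over $\mathbb Q(q)(u)$: multiply each finite expression by $D_d$ and verify polynomiality.
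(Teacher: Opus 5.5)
Your overall route is the paper's. You work through the finitely many pairs $(j,J)$ in \eqref{eq:theta-zeta}, split by the sign and residue class of $w$, reduce to geometric series, and certify $D_d$ by an exact computation over $\mathbb Q(q)$; the paper's proof is essentially the assertion that this finite computation works out. Your handling of the condition $w\ge0$ is more careful than the paper's wording, since that condition couples $z_i$, $i<j$, with $z_i$, $i>j$, and so needs a cone decomposition. That decomposition also produces denominator rays $(d-i')e_i+ie_{i'}$, $i<j<i'$, which are missing from your list.

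However, your labour-saving symmetry claim is false: the reversal $i\mapsto d-i$ does \emph{not} carry the $(j,J)$-term to the $(d-j,d-J)$-term. With $z'_i=z_{d-i}$ one has $w_{d-j}(z')=-w_j(z)$. So the smallest-maximizer band $-jk\le w<(d-j)k$ of \eqref{eq:first-max-band} becomes $-jk<w\le(d-j)k$, and the two terms differ by wall points. For $d=3$, with $\delta=(a,b)$, the $(1,\{2\})$-term is $C(\{1,2\})\,q^{-1}U^3/\bigl((1-U^2)(1-q^{-1}U^3)\bigr)$, which contains the diagonal $a=b$. The $(2,\{1\})$-term is $U^2$ times this. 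Only the total $Z_{\alpha,d}$ is reversal invariant, so halving the casework needs an explicit wall correction.

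Your expected cancellation mechanism is also wrong, and this is the real content of the lemma. The superfluous factors do not cancel inside a single $(j,J)$-term or between Dynkin-paired terms; in the cases below the paired terms have equal principal parts, which add. They cancel only after adding terms of different supports on the two sides of a chamber wall, through identities among the $C(I)$.

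\begin{itemize}
\item For $d=3$, the factor $q+u$ survives both in $2C(\{1\})U^2/(1-U^2)$ and in the sum of the two $J\ne\varnothing$ terms. The residue of the total at $u=-q$ is proportional to $C(\{1\})-C(\{1,2\})/(q+1)$, which vanishes because $(q+1)C(\{1\})=C(\{1,2\})$.
\item For $d=4$, the terms containing the wall ray $(2,1,0)$ carry $1-q^{-10}u^8$. Write $\delta=N(2,1,0)+(\varepsilon,0,\delta_3)$, sum over $(\varepsilon,\delta_3)$, and put $g=q^{-6}u^4$. This gives $\bigl(\frac1{1-q^{-3}u^3}-\frac1{1-q^{-3}u^2}\bigr)\bigl(C(\{1,2\})+C(\{1,2,3\})\frac{g}{1-g}\bigr)$. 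It vanishes at $u^4=-q^5$ only because $(q+1)C(\{1,2\})=C(\{1,2,3\})$.
\end{itemize}

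So your first attempt for $d=4,5$ cannot succeed: $q^5+u^4=q^5(1+q^{-1}U^4)$ is not of the form $1\pm U^k$. The polynomiality check must be run on the full sum, as the paper does.

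Finally, your degree argument does not yield $4,10,16$, because terms $X_j^{N}/(1-X_j)$ with $N\ge2$ are unbounded as $u\to\infty$. What works is the following. Each $(j,J)$-term is the lattice-point generating function of a pointed semi-open cone, since all its defining conditions are homogeneous, and $dH>0$ on that cone away from the origin. A half-open simplicial decomposition then shows each term has a finite limit as $u\to\infty$. Hence $\deg_u(D_dZ_{\alpha,d})\le\deg_uD_d$, which is exactly $4,10,16$.
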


\begin{proof}
Formula \eqref{eq:theta-zeta} contains only finitely many pairs \((j,J)\).  For a fixed
pair, substitute the product \eqref{eq:Theta}.  The exponent of \(y\) is the integer
\(w_j(z)\), while \eqref{eq:Njw} depends only on its sign and on the displayed residue
class.  Fixing the residue class converts every remaining variable sum into a product of
ordinary geometric series.  Collecting the resulting rational functions and cancelling
common factors gives, respectively,
\begin{align*}
d=3:\quad&
(q-u)(q^4-u^3),\\
d=4:\quad&
(q-u)(q+u)(q^3-u^2)(q^3+u^2)(q^5-u^4),\\
d=5:\quad&
(q-u)(q^6-u^5)(q^7-u^5)(q^8-u^5).
\end{align*}
The factors of \(D_d\) involving only \(q\) come from the constants \(C(I)\).
The largest \(u\)-degrees left after multiplication by these common denominators are
\(4,10,16\).  This calculation uses, for \(d=3,4,5\), respectively \(2,6,16\) choices
of \((j,J)\) up to the reversal symmetry, and only the residue classes shown in the
statement.  Thus it is a finite identity of geometric series, rather than an appeal to
coefficient agreement to an unspecified order.
\end{proof}

\subsection{Finite numerator verification}

Write
\[
  D_d(q,u)=\sum_{r=0}^{R_d}d_{d,r}(q)u^r
\]
and set
\begin{equation}\label{eq:Adn}
  A_{d,n}(q)
  =
  [u^n]Z_{\alpha,d}(s)
  =
  \sum_{\substack{\delta\in\ZZ_{\ge0}^{d-1}\\dH(\delta)=n}}
  C(\supp\delta)q^{-\Phi(\delta)}.
\end{equation}
This is a finite sum.  Indeed, \(dH(\delta)\ge i(d-i)\delta_i\), and hence every vector
occurring in \eqref{eq:Adn} satisfies
\begin{equation}\label{eq:delta-finite-bound}
  0\le\delta_i\le\frac{n}{i(d-i)}.
\end{equation}
Here the stated inequality follows from
\[
  dH(\delta)\ge dS_i(\delta)
  =\sum_{k=1}^{d-1}b_{ik}\delta_k
  \ge b_{ii}\delta_i=i(d-i)\delta_i,
\]
since all \(b_{ik}\) and \(\delta_k\) are nonnegative.

\begin{proposition}[Finite coefficient verification]\label{prop:finite-coefficient-verification}
Let
\[
  M_3=4,\qquad M_4=10,\qquad M_5=16.
\]
For \(d=3,4,5\) and \(0\le n\le M_d\), define
\begin{equation}\label{eq:pd-coefficient}
  p_{d,n}(q)
  =
  \sum_{r=0}^{\min(n,R_d)}d_{d,r}(q)A_{d,n-r}(q).
\end{equation}
Using the finite bounds \eqref{eq:delta-finite-bound}, these coefficients are exactly the
coefficients of the polynomial \(P_d(q,u)\) displayed in
Proposition~\ref{prop:explicit-low-rank-zeta}.  Consequently
\[
  D_d(q,u)Z_{\alpha,d}(s)=P_d(q,u)
\]
for \(d=3,4,5\).
\end{proposition}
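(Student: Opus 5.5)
The argument is a truncated power-series comparison. By Lemma~\ref{lem:low-dimensional-denominator-reduction}, $D_d(q,u)Z_{\alpha,d}(s)$ is a polynomial in $u$ over $\mathbb Q(q)$ of degree at most $M_d$. On the other hand, \eqref{eq:zeta-delta} shows that $Z_{\alpha,d}$ is a power series in $u$ whose coefficients are the quantities $A_{d,n}(q)$ of \eqref{eq:Adn}. This series converges absolutely for $|u|<q$, i.e.\ for $\operatorname{Re}(s)<d$, by Theorem~\ref{thm:integrability}, so the identity can be read coefficientwise. Each $A_{d,n}$ is a finite sum: by \eqref{eq:delta-finite-bound} only vectors with $0\le\delta_i\le n/(i(d-i))$ contribute, so $A_{d,n}\in\mathbb Q(q)$ is computable exactly.

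The key steps are as follows. First, multiply the power series $\sum_n A_{d,n}u^n$ by the polynomial $D_d=\sum_r d_{d,r}u^r$, which is legitimate inside the disc $|u|<q$. The coefficient of $u^n$ in the product is then exactly $p_{d,n}$ as defined in \eqref{eq:pd-coefficient}. Second, since the product equals the polynomial $D_dZ_{\alpha,d}$ of degree at most $M_d$, and both sides are analytic in the disc, the identity theorem gives $D_dZ_{\alpha,d}=\sum_{n\le M_d}p_{d,n}u^n$. It also gives $p_{d,n}=0$ for $n>M_d$, although that fact is not needed. Third, evaluate $p_{d,n}$ for $0\le n\le M_d$ by enumerating the finitely many admissible $\delta$. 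For each such $\delta$, compute the support $I$, the block lengths $\Blocks(I)$ and hence $C(I)$, the form $\Phi(\delta)$, and the value $dH(\delta)=\max_j\langle b_j,\delta\rangle$; then convolve with the coefficients $d_{d,r}$. Finally, check that the resulting rational functions in $q$ coincide with the coefficients of $P_d$. Because $D_d$ already contains the $q$-only factors that clear the denominators of the $C(I)$, these results should come out as polynomials.

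The main obstacle is the third step, the sheer size of the exact symbolic enumeration for $d=5$ up to $n=16$. The bounds give $\delta_1,\delta_4\le4$ and $\delta_2,\delta_3\le2$, so a few hundred vectors must be handled, with $2^4$ supports. I would organize the computation by grouping vectors by support so that $C(I)$ factors out, and use the reversal symmetry $\delta_i\leftrightarrow\delta_{d-i}$, under which $\Phi$, $H$ and $C$ are invariant, to halve the work. The arithmetic itself I would carry out in exact computer algebra over $\mathbb Q(q)$.

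For $d=3$ the check is small enough to do by hand. There $n\le4$ forces $\delta_1,\delta_2\le2$, and one verifies directly, for example, that $p_{3,0}=d_{3,0}C(\varnothing)=q^2$. As sanity checks for the remaining cases, I would confirm that $P_d(q,1)/D_d(q,1)$ equals $\nu(Y)$ from Theorem~\ref{thm:volume}, and that the residue at $u=q$ matches Example~\ref{ex:low-dimensional-pole-coefficients}.
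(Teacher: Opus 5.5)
Your proposal is correct and follows the paper's proof essentially step for step. You use the degree bound from Lemma~\ref{lem:low-dimensional-denominator-reduction}, the Cauchy product identifying $p_{d,n}$ with the coefficients of $D_dZ_{\alpha,d}$, and exact finite evaluation of the $A_{d,n}$ via \eqref{eq:delta-finite-bound}, with $d=4,5$ delegated to exact symbolic algebra over $\mathbb Q(q)$; your remark about absolute convergence on $|u|<q$ and the identity theorem simply makes explicit the coefficient comparison that the paper calls the Cauchy product identity.
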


\begin{proof}
The equality in \eqref{eq:pd-coefficient} is the Cauchy product identity for
\(D_dZ_{\alpha,d}\).  Every \(A_{d,n-r}\) is evaluated by the explicitly bounded finite
sum \eqref{eq:Adn}--\eqref{eq:delta-finite-bound}; inserting \(C(\supp\delta)\) from
\eqref{eq:C-I-Phi} and collecting powers of \(q\) gives the coefficients printed in
\(P_d(q,u)\). For \(d=3\) this is the direct calculation written below.  For
\(d=4,5\), the same finite sums may be evaluated by hand or verified by exact symbolic
algebra over \(\mathbb Q(q)\); no floating-point comparison is used.
Lemma~\ref{lem:low-dimensional-denominator-reduction} shows that \(D_dZ_{\alpha,d}\) is
a polynomial of degree at most \(M_d\).  Therefore these \(M_d+1\) coefficient identities
determine the entire polynomial and prove the assertion.
\end{proof}

\subsection{The case \(d=3\)}

For completeness, n the case \(d=3\) the full calculation fits in one line.  Write
\(\delta=(a,b)\).  The smallest-maximizer convention gives \(H=S_1\) when \(a\ge b\)
and \(H=S_2\) when \(b>a\).  Thus
\begin{align}\label{eq:appendix-d3-decomposition}
  Z_{\alpha,3}(s)={}&C(\varnothing)
  +(C(\{1\})+C(\{2\}))\frac{q^{-2}u^2}{1-q^{-2}u^2}
\nonumber\\
  &+C(\{1,2\})
  \frac{q^{-4}u^3(1+q^{-2}u^2)}
  {(1-q^{-4}u^3)(1-q^{-2}u^2)}.
\end{align}
Combining the terms gives \(P_3(q,u)/D_3(q,u)\).  The preceding lemmas show that the
formulas for \(d=4,5\) follow from the same finite residue-class summation and may be
verified by exact symbolic algebra.

\endgroup
\end{appendix}

\end{document}